\renewcommand{\emptyset}{\varnothing}
\begin{document}

\newtheorem{definition}{Definition}[section]
\newtheorem{lemma}{Lemma}[section]
\newtheorem{theorem}{Theorem}[section]
\newtheorem{corollary}{Corollary}[section]
\newtheorem{remark}{Remark}[section]
\newtheorem{example}{Example}[section]


\small

\title{Conditions for Extinction Events in Chemical Reaction Networks with Discrete State Spaces}
\author{Matthew D. Johnston$^{a}$, David F. Anderson$^{b}$,\\Gheorghe Craciun$^{b,c}$, and Robert Brijder$^{d}$\\ \\ ${}^{a}$ Department of Mathematics,\\San Jos\'{e} State University, San Jos\'{e}, CA, 95192 USA\\
${}^{b}$ Department of Mathematics,\\University of Wisconsin-Madison, Madison, WI, 53706 USA\\
${}^{c}$ Department of Biomolecular Chemistry,\\University of Wisconsin-Madison, Madison, WI, 53706 USA\\
 ${}^{d}$ Department WET-INF,\\ Hasselt University, Diepenbeek, Belgium}
\date{}
\maketitle


\vspace{0.2in}

\begin{abstract}
\small

We study chemical reaction networks with discrete state spaces, such as the standard continuous time Markov chain model, and present sufficient conditions on the structure of the network that guarantee the system exhibits an extinction event.
The conditions we derive involve creating a modified chemical reaction network called a domination-expanded reaction network and then checking properties of this network. We apply the results to several networks including an EnvZ-OmpR signaling pathway in \emph{Escherichia coli}. This analysis produces a system of equalities and inequalities which, in contrast to previous results on extinction events, allows algorithmic implementation. Such an implementation will be investigated in a companion paper where the results are applied to 458 models from the European Bioinformatics Institute's BioModels database.

\end{abstract}

\noindent \textbf{Keywords:} reaction network, reaction graph, extinction, stochastic process, Petri net \newline \textbf{AMS Subject Classifications:} 	92C42, 60J27

\bigskip

\section{Introduction}
\label{introduction}

Continuous state differential equations  are a popular  modeling choice for the chemical concentrations of biochemical reaction networks in several disciplines, including industrial chemistry and systems biology. However, differential equations 
should only be used to model  chemical concentrations  when the counts of the reactant species are high \cite{Kurtz2,AndKurtz2015,AndKurtz2011}.  When the multiplicity of the individual species is low, as is often the case in enzymatic and genetic systems, it is important to use a model with a discrete state space which tracks individual molecular counts.

Predictions pertaining to the long-term behavior of a particular system can change dramatically depending upon whether the system is modeled with differential equations or with a discrete state space. In particular, discrete-space models may exhibit an extinction event where none exists in the corresponding continuous state model.
For example, consider the following chemical reaction network:

\begin{center}
\begin{tikzpicture}[auto, outer sep=3pt, node distance=2cm,>=latex']
\node (C1) {$2X_1$};
\node [right of = C1, node distance = 2.5cm] (C2) {$X_1+X_2$};
\node [right of = C2, node distance = 2.5cm] (C3) {$2X_2$};
\path[->,bend left=10] (C1) edge node [above left = -0.15cm] {\tiny $1$} (C2);
\path[->,bend left=10] (C2) edge node [below right = -0.15cm] {\tiny $2$} (C1);
\path[->] (C2) edge node [above = -0.15cm] {\tiny $3$} (C3);
\end{tikzpicture}
\end{center}

\noindent 
where the labels correspond to the enumeration of the reactions. The deterministic mass action model predicts an asymptotically stable steady state for a wide range of parameter values. However, for the discrete-space model with stochastic mass-action kinetics  and $M=X_1(0) + X_2(0)$, the state $\{ \mathbf{X}_1 = 0, \mathbf{X}_2 = M\}$   is the inevitable absorbing state  regardless of parameter values. This extinction event can be achieved by reaction 3 occurring until the count of species $X_1$ is zero, at which point no further reactions may occur.

Several frameworks exist for tracking trajectories of discrete state  chemical reaction systems, including those of continuous time Markov chains \cite{AndKurtz2015,AndKurtz2011} and stochastic Petri nets \cite{Bause-Kritzinger}. In these settings, the admissible transitions between states are assumed to occur randomly at a known  rate and the occurrence of each reaction instantaneously updates the system according to the stoichiometry of the associated reaction. Analysis of such systems is typically conducted by generating sample trajectories (through a stochastic simulation algorithm, e.g. \emph{Gillespie's Algorithm} \cite{Gillespie} or the next reaction method \cite{G-B,Anderson}), by analyzing the evolution of the probability distribution via Kolmogorov's forward equations (i.e.~the \emph{chemical master equation}), by characterizing the stationary distributions of the models \cite{A-C-K}, or by studying the stochastic equations for the model \cite{AndKurtz2011,AndKurtz2015}.

The study of extinction events in discrete interaction models is well-established in population dynamics and epidemic modeling, but the corresponding study in systems biology has only recently gained widespread attention.
 In \cite{A-E-J}  Anderson \emph{et al.}~described a large class of systems for which an extinction event necessarily occurs in the discrete model.  Interestingly, this class of models had previously been shown to have a particular ``robustness'' when modeled with deterministic ordinary differential equations \cite{Sh-F}.   In \cite{Brijder} R.~Brijder utilized tools from Petri Net Theory to further extend the scope of networks known to have extinction behavior, by relating a kernel condition introduced in \cite{A-E-J} to the $T$-invariants of the corresponding Petri net. Related recent work analyzing transient and post-extinction behavior in discrete chemical reaction systems can be found in \cite{Anderson2016,Enciso2016}. 


In this paper, we further develop a network-based approach to determining when discrete-space chemical reaction systems may exhibit an extinction event. Our main results, Theorem \ref{mainresult} and Corollary \ref{maincorollary}, state that a chemical reaction network with a discrete state space exhibits an extinction event if there is a modified network, called the domination-expanded reaction network, on which a particular set of inequalities on the edges \textit{cannot} be satisfied. The conditions we present may be summarized as systems of equalities and inequalities and, like Corollary 2 of \cite{Brijder}, suggests computational implementation. 
Such an implementation will be explored in further depth in a follow-up paper \cite{J2017}. We demonstrate the effectiveness of Corollary \ref{maincorollary} on several models, including a model of the EnvZ-OmpR signaling pathway in \emph{Escherichia coli} \cite{Sh-F}.

The notation of the paper is drawn from \emph{chemical reaction network theory} which has proven  effective for relating topological properties of a network's reaction graph to its admissible qualitative dynamical behaviors \cite{F1,H,H-J1,Fe2,F3,Fe4}. The notions introduced here may be equivalently defined in the context of Petri nets, which we summarize in Appendix \ref{appendixd}  \cite{Bause-Kritzinger,Brijder}. We also adopt the following common notation throughout the paper:
\begin{itemize}
\item
$\mathbb{R}_{\geq 0} = \{x \in \mathbb{R} \mid x \geq 0\}$ and $\mathbb{R}_{> 0} = \{x \in \mathbb{R} \mid x > 0\}$,
\item
for $\mathbf{v} = (v_1,\ldots,v_n) \in \mathbb{R}_{\geq 0}^n$, we define $\mathrm{supp}(\mathbf{v}) = \{ i \in \{1,\dots,n\} \; | \; v_i > 0 \}$,
\item
for a set $X = \{ X_1, X_2, \ldots, X_n \}$ of indexed elements and a subset $W \subseteq X$, we define $\mathrm{supp}(W) = \{ i \in \{1,\dots,n\} \; | \; X_i \in W \}$,
\item
for a subset $W \subseteq X$, we define the complement $W^c = \{ x \in X \; | \; x \not\in W \}$,
\item
for $\mathbf{v}, \mathbf{w} \in \mathbb{R}^n$, we define $\mathbf{v} \leq \mathbf{w}$ if $v_i \leq w_i$ for each $i \in \{1,\dots,n\}$.
\end{itemize}

\section{Background}
\label{background}

We outline the background notation and terminology relevant to the study of \emph{chemical reaction network theory} (CRNT). (For further background, see Martin Feinberg's online lecture notes \cite{F3}.)

\subsection{Chemical Reaction Networks}
\label{crntsection}


The fundamental object of interest in CRNT is the following.

\begin{definition}
\label{crn}
A \textbf{chemical reaction network} (CRN) is given by a triple of finite sets $(\mathcal{S},\mathcal{C},\mathcal{R})$ where:
\begin{enumerate}
\item
The \textbf{species set} $\mathcal{S} = \{ X_1, \ldots, X_m \}$ contains the species of the CRN.
\item
The \textbf{reaction set} $\mathcal{R} = \{ R_1, \ldots, R_r \}$ consists of ordered pairs $(y,y') \in \mathcal{R}$ where
\begin{equation}
\label{complex}
y = \sum_{i=1}^m y_i X_i \; \mbox{ and } \; y' = \sum_{i=1}^m y_i' X_i,
\end{equation}
and where the values $y_{i},y_{i}' \in \mathbb{Z}_{\geq 0}$ are the \textbf{stoichiometric coefficients}. We will also write reactions $(y,y')$ as $y \to y'$.
\item The \textbf{complex set} $\mathcal{C}$ consists of the linear combinations of the species in \eqref{complex}.  Specifically,
 $\mathcal{C} = \{ y \,  | \, y\to y' \in \mathcal{R} \} \cup \{ y' \, | \, y\to y' \in \mathcal{R} \}$. The number of  distinct complexes is denoted $|\mathcal{C}| = n$.

Allowing for a slight abuse of notation, we will let $y$ denote both the complex itself and the complex vector $y = (y_1,\ldots,y_m)^T \in \mathbb{Z}_{\geq 0}^m$.
\end{enumerate}
\end{definition}



\noindent We assume an arbitrary but fixed ordering of the species, reactions and complexes. It is common to impose that a CRN does not contain any self-loops (i.e.\ reactions of the form $y \to y$). Since this assumption is not used in our results, and since it is common to allow self-loops in Petri Net Theory, we will not make this assumption here.

The interpretation of reactions as directed edges naturally gives rise to a reaction graph $G=(V,E)$ where the set of vertices is given by the complexes (i.e.~$V = \mathcal{C}$) and the set of edges is given by the reactions (i.e.~$E = \mathcal{R})$. The following terminology will be used.

\begin{enumerate}[(i)]

\item A complex $y$ is \emph{connected} to a complex $y'$ if there exists a sequence of complexes $y = y_{\mu(1)}, y_{\mu(2)}, \ldots, y_{\mu(\ell)} = y'$ such that either $y_{\mu(k)} \to y_{\mu(k+1)}$ or $y_{\mu(k+1)} \to y_{\mu(k)}$ for all $k \in \{1,\dots,\ell-1\}$.

\item There is a \emph{path} from $y$ to $y'$ if there is a sequence of distinct complexes such that $y = y_{\mu(1)} \to y_{\mu(2)} \to \cdots \to y_{\mu(\ell)} = y'$.

\item A maximal set of mutually connected complexes is called a \emph{linkage class} (LC) while a maximal set of mutually path-connected complexes is called a \emph{strong linkage class} (SLC). 
The set of linkage classes will be denoted $\mathcal{L}$ 
while the set of SLCs will be denoted $\mathcal{W}$. 

\item
An SLC $W \in \mathcal{W}$ is called \emph{terminal} if there are no outgoing reactions, i.e.~$y\in W$ and $y\to y'\in \mathcal{R}$ implies $y'\in W$. The set of terminal SLCs will be denoted $\mathcal{T} \subseteq \mathcal{W}$. A complex $y \in \mathcal{C}$ is called terminal if it belongs to a terminal SLC, and a reaction $y \to y' \in \mathcal{R}$ is terminal if $y$ is terminal. 
\item
A set $\mathcal{Y} \subseteq \mathcal{C}$ is called an \emph{absorbing complex set} if it contains every terminal complex and has no outgoing edges, i.e. $y \in \mathcal{Y}$ and $y \to y' \in \mathcal{R}$ implies $y' \in \mathcal{Y}$. A complex $y \in \mathcal{Y}$ is called $\mathcal{Y}$-interior, and a reaction $y \to y' \in \mathcal{R}$ is called $\mathcal{Y}$-interior if $y$ is $\mathcal{Y}$-interior; otherwise they are $\mathcal{Y}$-exterior.
\end{enumerate}
\noindent Absorbing complex sets are a generalization of the set of terminal complexes of a CRN, since they must contain, but may be strictly larger than, this set. 
Note that the set of terminal complexes is a closed complex set of the CRN, as is the set $\mathcal{Y} = \mathcal{C}$. We will be particularly interested in the case where $\mathcal{Y}$ is the set of terminal complexes, as this provides the foundation upon which our main results are built.

To each reaction $y \to y' \in \mathcal{R}$ we associate a \emph{reaction vector} $y' - y \in \mathbb{Z}^m$ which tracks the net gain and loss of each chemical species as a result of the  occurrence of this reaction. The \emph{stoichiometric subspace} is defined by
\[S = \mbox{span} \left\{ y' - y \in \mathbb{Z}^m \; | \; y \to y' \in \mathcal{R} \right\}.\]
\noindent The \emph{stoichiometric matrix} $\Gamma \in \mathbb{Z}^{m \times r}$ is the matrix with the reaction vectors as columns. 

A CRN is said to be \emph{conservative} (respectively, \emph{subconservative}) if there exists a $\mathbf{c} \in \mathbb{R}_{> 0}^m$ such that $\mathbf{c}^T \Gamma = \mathbf{0}^T$ (respectively, $\mathbf{c}^T \Gamma \leq \mathbf{0}^T$). The vector $\mathbf{c}$ is called a \textit{conservation vector}.  Conservative CRNs have the property that a particular linear combination of all species remains constant as a result of each reaction, while subconservative CRNs have a combination of species which is nonincreasing in every reaction. A common example is conservation in the overall amount of enzyme or substrates in a closed enzymatic system, but there need not be such a physical interpretation.

We present three examples in order to illustrate definitions.
\begin{example}
\label{example234}
Consider the following CRN:
\begin{center}
\begin{tikzpicture}[auto, outer sep=3pt, node distance=2cm,>=latex']
\node (C1) {$X_1+X_2$};
\node [right of = C1, node distance = 2.5cm] (C2) {$2X_2$};
\node [below of = C1, node distance = 1cm] (ghost) {$$};
\node [right of = ghost, node distance = 0.44cm] (C3) {$X_2$};
\node [below of = C2, node distance = 1cm] (C4) {$X_1$};
\path[->, bend left = 10] (C1) edge node [above left = -0.15cm] {\tiny $1$} (C2);
\path[->, bend left = 10] (C2) edge node [below right = -0.15cm] {\tiny $2$} (C1);
\path[->] (C3) edge node [above = -0.15cm] {\tiny $3$} (C4);
\end{tikzpicture}
\end{center}
This CRN has the sets $\mathcal{S} = \{ X_1, X_2\}$, $\mathcal{R} = \{ X_1 + X_2 \to 2X_2, 2X_2 \to X_1 + X_2, X_2 \to X_1 \}$, and $\mathcal{C} = \{ X_1 + X_2, 2X_2, X_2, X_1 \}$. The linkage classes are
\[
	\mathcal{L} = \left\{ \{X_1 + X_2, 2X_2 \}, \{ X_2, X_1 \} \right\}
	\]
	 while the SLCs are
	 \[
	 	\mathcal{W} = \left\{ \{ X_1 + X_2, 2X_2 \}, \{X_2 \}, \{X_1 \}\right\}.
	\]
	 Note that SLCs may consist of singletons. The terminal SLCs are
	 \[
	 \mathcal{T} = \left\{ \{ X_1 + X_2, 2X_2 \}, \{ X_1 \} \right\}.
	\]
 The stoichiometric matrix is as follows:
\[\Gamma = \left[ \begin{array}{ccc} -1 & 1 & -1 \\ 1 & -1 & 1 \end{array} \right]. \]
\noindent The stoichiometric subspace is given by $S = \mbox{span} \{ (1,-1)^T \}$, and there is the conservation vector $\mathbf{c} = (1,1)^T$. This conservation vector represents the fact that $X_1 + X_2$ is constant.
\end{example}

\begin{example}
\label{example235}
Consider the following CRN:
\begin{center}
\begin{tikzpicture}[auto, outer sep=3pt, node distance=2cm,>=latex']
\node (C1) {$X_1$};
\node [right of = C1, node distance = 2cm] (C2) {$2X_2$};
\node [below of = C1, node distance = 0.75cm] (C3) {$X_2$};
\node [right of = C3, node distance = 2cm] (C4) {$2X_1$};
\path[->] (C1) edge node [above = -0.15cm] {\tiny $1$} (C2);
\path[->] (C3) edge node [above = -0.15cm] {\tiny $2$} (C4);
\end{tikzpicture}
\end{center}
The set of terminal complexes is $\{ 2X_2, 2X_1 \}$. There are several additional choices for absorbing complex sets, including $\mathcal{Y} = \{ X_1, 2X_2, 2X_1\}$ and $\mathcal{Y} = \{ 2X_2, X_2, 2X_1 \}$. The stoichiometric matrix is as follows:
\[\Gamma = \left[ \begin{array}{cc} -1 & 2 \\ 2 & -1 \end{array} \right]. \]
\noindent The stoichiometric subspace is given by $S = \mbox{span} \{ (-1, 2)^T, (2, -1)^T \} = \mathbb{R}^2$. There is no vector $\mathbf{c} \in \mathbb{R}_{> 0}^2$ for which $\mathbf{c}^T \Gamma \le \mathbf{0}^T$, so the CRN is not conservative or subconservative.

\end{example}

\begin{example}
\label{example236}
Consider the following CRN:
\begin{center}
\begin{tikzpicture}[auto, outer sep=3pt, node distance=2cm,>=latex']
\node (C1) {$X_1+X_2$};
\node [right of = C1, node distance = 2cm] (C2) {$X_1$};
\node [right of = C2, node distance = 2cm] (C3) {$X_2$};
\path[->] (C1) edge node [above = -0.15cm] {\tiny $1$} (C2);
\path[->, bend left = 10] (C2) edge node [above left = -0.15cm] {\tiny $2$} (C3);
\path[->, bend left = 10] (C3) edge node [below right = -0.15cm] {\tiny $3$} (C2);
\end{tikzpicture}
\end{center}
The stoichiometric matrix is as follows:
\[\Gamma = \left[ \begin{array}{ccc} 0 & -1 & 1 \\ -1 & 1 & -1 \end{array} \right]. \]
\noindent There is no vector $\mathbf{c} \in \mathbb{R}_{> 0}^2$ such that $\mathbf{c}^T \Gamma = \mathbf{0}^T$, so the CRN is not conservative; however, the vector $\mathbf{c} = (1,1)^T$ has the property that $\mathbf{c}^T \Gamma = (-1,0,0)\leq \mathbf{0}$ so that the CRN is subconservative.
\end{example}

\subsection{Chemical Reaction Networks with Discrete State Spaces}
\label{stochasticsection}



A discrete \emph{state} $\mathbf{X}$ is an element of $\mathbb{Z}_{\geq 0}^m$ and denotes the molecular counts of each species. We let $\mathbf{X}(t) = (\mathbf{X}_1(t), \ldots, \mathbf{X}_m(t))^T \in \mathbb{Z}_{\geq 0}^m$ denote the state where $\mathbf{X}_i(t)$ corresponds to the count of species $X_i$ at time $t$. These discrete states evolve as follows:
\begin{equation}
\label{markov}
\mathbf{X}(t) = \mathbf{X}(0) + \Gamma \; \mathbf{N}(t)
\end{equation}
where $\mathbf{N}(t) = (N_1(t),\ldots,N_r(t))^T$ and, for all $k \in \{1,\ldots,r\}$, $N_{k}(t) \in \mathbb{Z}_{\geq 0}$ is the number of times the $k$th reaction has occurred up to time $t$. There are several established frameworks for modeling the time-evolution of CRNs on discrete state spaces, including that of continuous time Markov chains (CTMCs) and stochastic Petri nets. We will not concern ourselves with precise dynamical details; rather, we will focus on where trajectories may evolve in $\mathbb{Z}_{\geq 0}^m$. For a similar treatment, see the paper of L. Paulev\'{e} \emph{et al.} \cite{P-C-K}.

We will say that a complex $y\in \mathcal{C}$ is \emph{charged} at state $\mathbf{X} \in \mathbb{Z}_{\ge 0}^m$ if $\mathbf{X}_i \geq y_i$ for all $i\in \{1,\dots,m\}$.  We will then say that reaction $y \to y' \in \mathcal{R}$ is \emph{charged} at state $\mathbf{X} \in \mathbb{Z}_{\ge 0}^m$ if the ``source complex'' $y$ is charged at $\mathbf{X}$.  Note that a reaction is therefore charged at a state $\mathbf{X}$ if the species counts are sufficient for the source complex of that reaction.



We will be primarily interested in how trajectories $\mathbf{X}(t)$ move through the state space $\mathbb{Z}_{\geq 0}^m$ of subconservative CRNs. In particular, we will be interested in the long-term behavior. We therefore introduce the following terminology, which is  adapted from the conventions of  stochastic processes.
\begin{definition}
\label{recurrence}
Consider a CRN on a discrete state space. Then:
\begin{enumerate}
\item
A state $\mathbf{X} \in \mathbb{Z}_{\geq 0}^m$ \textbf{reacts to} a state $\mathbf{Y} \in \mathbb{Z}_{\geq 0}^m$ (denoted $\mathbf{X} \to \mathbf{Y}$) if there is a reaction $y \to y' \in \mathcal{R}$ such that $\mathbf{Y} = \mathbf{X} + y' - y$ and $y$ is charged at state $\mathbf{X}$.
\item
A state $\mathbf{Y} \in \mathbb{Z}_{\geq 0}^m$ is \textbf{reachable} from a state $\mathbf{X} \in \mathbb{Z}_{\geq 0}^m$ (denoted $\mathbf{X} \leadsto \mathbf{Y})$ if there exists a sequence of states such that $\mathbf{X} = \mathbf{X}_{\nu(1)} \to \mathbf{X}_{\nu(2)} \to \cdots \to \mathbf{X}_{\nu(l)} = \mathbf{Y}$.
\item
A state $\mathbf{X} \in \mathbb{Z}_{\geq 0}^m$ is \textbf{recurrent} if, for any $\mathbf{Y} \in \mathbb{Z}_{\geq 0}^m$, $\mathbf{X} \leadsto \mathbf{Y}$ implies $\mathbf{Y} \leadsto \mathbf{X}$; otherwise, the state is \textbf{transient}.
\end{enumerate}
\end{definition}

\noindent Note that the state space of a subconservative CRN is finite (see Theorem 1, \cite{DBLP:conf/ac/MemmiR75}). For this classification of CRNs, therefore, the notion of recurrence introduced above therefore agrees with the notion of positive recurrence from the language of CTMC (see \cite{Lawler}).



We now extend the properties of recurrence and transience of states to the complexes and reactions of a CRN. Further considerations on the recurrence properties of the SLCs of a CRN are contained in Appendix \ref{appendixc}.

\begin{definition}
\label{turnedon}
Consider a CRN on a discrete state space. Then:
\begin{enumerate}
\item
A complex $y \in \mathcal{C}$ is \textbf{recurrent} from state $\mathbf{X} \in \mathbb{Z}_{\geq 0}^m$ if $\mathbf{X} \leadsto \mathbf{Y}$ 
implies that there is a $\mathbf{Z}$ for which $\mathbf{Y} \leadsto \mathbf{Z}$
and $y$ is charged at $\mathbf{Z}$; otherwise, $y$ is \textbf{transient} from $\mathbf{X}$.
\item
A reaction $y \to y' \in \mathcal{R}$ is \textbf{recurrent} from state $\mathbf{X} \in \mathbb{Z}_{\geq 0}^m$ if the source complex $y$ is recurrent from $\mathbf{X}$; otherwise, $y \to y' \in \mathcal{R}$ is \textbf{transient} from $\mathbf{X}$.
\end{enumerate}
\end{definition}


\noindent In plain English, a complex $y$ is recurrent from a state $\mathbf{X}$ if, whenever the process can go from the state $\mathbf{X}$ to the state $\mathbf{Y}$, then the process can move from the state $\mathbf{Y}$ to some state $\mathbf{Z}$ where $y$ is charged.

The following clarifies the type of behavior for CRNs on discrete state spaces in which we will be interested.

\begin{definition}
	Consider a CRN on a discrete state space.
	We will say that the CRN exhibits:
\begin{enumerate}
\item
an \textbf{extinction event on $\mathcal{Y} \subseteq \mathcal{C}$ from $\mathbf{X}\in \mathbb{Z}^m_{\ge 0}$} if every complex $y \in \mathcal{Y}$ is transient from $\mathbf{X}$.
\item
a \textbf{guaranteed extinction event on $\mathcal{Y} \subseteq \mathcal{C}$} if it has an extinction event on $\mathcal{Y}$ from every $\mathbf{X}\in \mathbb{Z}^m_{\ge 0}$.
\end{enumerate}
\end{definition}


\begin{example}
Consider the CRN in Example \ref{example234}. Through repeated application of reaction $3$, we can arrive at the state $\{ \mathbf{X}_1 = M, \mathbf{X}_2 = 0 \}$ where $M = \mathbf{X}_1(0) + \mathbf{X}_2(0)$. Since this is a possible outcome from any initial $\mathbf{X} \in \mathbb{Z}_{\ge 0}^2$, we have that this CRN has a guaranteed extinction event on $\mathcal{Y} = \{X_1+X_2, 2X_2, X_2\}$. Notice that no reaction may occur after the extinction event.
\end{example}

\begin{example}
	Consider the CRN in Example \ref{example236}.
Notice that the reaction $X_1+X_2 \to X_1$ cannot occur indefinitely since all other reactions in the CRN preserve $\mathbf{X}_1(t) + \mathbf{X}_2(t)$.     It follows that the model has a guaranteed extinction event on $\mathcal{Y} = \{ X_1+X_2 \}$. Notice, however, that so long as $\mathbf{X}_1(0) + \mathbf{X}_2(0) \ge 1$ the reactions $X_1 \to X_2$ and $X_2 \to X_1$ are both recurrent. An extinction event therefore does not necessarily imply that all reactions must cease.
\end{example}

\section{Main results}
\label{extinctionsection}

In this section, we motivate and present the main new constructions and theory of the paper (Theorem \ref{mainresult} and Corollary \ref{maincorollary}).


\subsection{Domination-expanded Reaction Networks}
\label{dominationsection}

We introduce the following.

\begin{definition}
\label{complexdomination}
Let $y, y' \in \mathcal{C}$ denote two distinct complexes of a CRN. We say that $y$ \textbf{dominates} $y'$ if $y' \leq y$. We define the \textbf{domination set} of a CRN to be
\begin{equation}
\label{dominationset}
\mathcal{D}^* = \left\{ (y,y') \in \mathcal{C} \times \mathcal{C} \; | \; y' \leq y, \; y \not= y' \right\}.
\end{equation}
\end{definition}

The notion of complex domination was introduced by D. Anderson \emph{et al.}\ in \cite{A-E-J} as an adaptation of the notion of ``differing in one species'' introduced by G. Shinar and M. Feinberg in \cite{Sh-F}. The domination property was extended to SLCs by R. Brijder in \cite{Brijder} where it was also shown that, for conservative CRNs, the domination properties give rise to a binary relation on the SLCs of a CRN whose transitive closure is a partial ordering on the SLCs of the CRN (Lemma 2, \cite{Brijder}). We consider further properties of transience and recurrence of SLCs in Appendix \ref{appendixc}. We note that the definition of complex domination in Definition \ref{complexdomination} is consistent with \cite{Brijder} but reversed from \cite{A-E-J}. 

\begin{example}
\label{example919}
Consider the CRNs from Examples \ref{example234}, \ref{example235}, and \ref{example236} respectively. 
For the CRN in Example \ref{example234}, we set $y_1 = X_1 + X_2$, $y_2 = 2X_2$, $y_3 = X_2$, and $y_4 = X_1$ and have $y_3 \leq y_1$, $y_3 \leq y_2$, and $y_4 \leq y_1$. For the CRN in Example \ref{example235}, we set $y_1 = X_1$, $y_2 = 2X_2$, $y_3 = X_2$, and $y_4 = 2X_1$, and have $y_1 \leq y_4$ and $y_3 \leq y_2$. For the CRN in Example \ref{example236}, we set $y_1 = X_1 + X_2$, $y_2 = X_1$, and $y_3 = X_2$, and have $y_2 \leq y_1$ and $y_3 \leq y_1$.
\end{example}

The key construction of this paper is the following, which uses the domination relations $\leq$ to expand CRNs into larger CRNs we call \emph{domination-expanded reaction networks}.

\begin{definition}
\label{dominationnetwork}
We say that $(\mathcal{S},\mathcal{C},\mathcal{R} \cup \mathcal{D})$ is a \textbf{domination-expanded reaction network} (dom-CRN) of the CRN $(\mathcal{S},\mathcal{C},\mathcal{R})$ if $\mathcal{D} \subseteq \mathcal{D}^*$. Furthermore, we say a dom-CRN is \textbf{$\mathcal{Y}$-admissible} if, given an absorbing complex set $\mathcal{Y} \subseteq \mathcal{C}$ of the dom-CRN, we have: (i) $\mathcal{R} \cap \mathcal{D} = \emptyset$, and (ii) $(y, y') \in \mathcal{D}$ implies $y' \not\in \mathcal{Y}$.
\end{definition}


\noindent A dom-CRN consists of the original CRN with additional directed edges corresponding to some (potentially all) of the domination relations $y' \leq y$. Note that the reaction arrows flow from the dominating complex to the ``smaller'' complex in the domination relation, i.e. $y' \leq y$ implies we add $y \to y'$. Consequently, like reactions, we will denote domination relations 
as either $(y,y')$ or $y \to y'$. A dom-CRN is admissible if we do not add any reactions which lead to the absorbing complex set $\mathcal{Y}$ of the dom-CRN. 

\begin{remark}
When applying Definition \ref{dominationnetwork}, we will commonly let the absorbing complex set $\mathcal{Y}$ coincide with the set of terminal complexes of the dom-CRN. In such cases, we will say a dom-CRN is simply \emph{admissible} with the understanding that $\mathcal{Y}$ is the set of terminal complexes.
\end{remark}

Note that a dom-CRN is a CRN in itself and therefore has associated to it all of the quantities and structural matrices given Section \ref{crntsection}. While a dom-CRN in general may have different structural properties than the original CRN, an important restriction is given by the following result, which is based on Lemma 2 of \cite{Brijder}. The proof is contained in Appendix \ref{appendixa}.

\begin{lemma}
\label{terminallemma}
If a CRN is subconservative, then for any dom-CRN: (i) the SLCs of the CRN and the dom-CRN coincide, and (ii) every terminal SLC of the dom-CRN is a terminal SLC of the CRN.
\end{lemma}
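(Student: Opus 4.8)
The plan is to exploit the strictly positive conservation vector supplied by subconservativity to manufacture a potential on complexes that is nonincreasing along every edge of the dom-CRN and \emph{strictly} decreasing along every added domination edge; this strict decrease is the mechanism that forbids domination edges from closing up a directed cycle. Concretely, I would fix a conservation vector $\mathbf{c} \in \mathbb{R}_{>0}^m$ with $\mathbf{c}^T \Gamma \le \mathbf{0}^T$ and define $\phi(y) = \mathbf{c}^T y$ for each complex $y \in \mathcal{C}$. Applying the subconservative inequality to the column $y' - y$ of $\Gamma$ gives $\phi(y') \le \phi(y)$ for every original reaction $y \to y' \in \mathcal{R}$. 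For a domination edge $(y,y') \in \mathcal{D} \subseteq \mathcal{D}^*$ we have $y' \le y$ componentwise with $y \ne y'$, so $y_j - y_j' > 0$ for at least one index $j$; since every entry of $\mathbf{c}$ is strictly positive, $\mathbf{c}^T(y - y') > 0$, i.e.\ $\phi(y') < \phi(y)$. Thus $\phi$ is nonincreasing along every edge of the dom-CRN and strictly decreasing along every edge of $\mathcal{D}$.

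For part (i), since the edge set of the CRN is contained in that of the dom-CRN, any two complexes lying in a common SLC of the CRN remain path-connected both ways in the dom-CRN, so each SLC of the CRN is contained in an SLC of the dom-CRN; the content is the reverse inclusion. I would take two complexes $y, y'$ in a common SLC of the dom-CRN, so that there are directed paths from $y$ to $y'$ and from $y'$ back to $y$ whose concatenation is a closed walk. Summing $\phi(u') - \phi(u)$ over the edges $u \to u'$ of this closed walk telescopes to $0$, while each summand is $\le 0$; hence every summand vanishes. Since each domination edge would contribute a strictly negative summand, the closed walk—and in particular both paths—uses only reactions in $\mathcal{R}$. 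Therefore $y$ and $y'$ lie in a common SLC of the CRN, which gives the reverse inclusion and hence (i).

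Part (ii) then follows quickly: if $W$ is a terminal SLC of the dom-CRN, then $W$ is an SLC of the CRN by (i), and terminality in the dom-CRN means no edge of $\mathcal{R} \cup \mathcal{D}$ leaves $W$; \emph{a fortiori} no edge of $\mathcal{R}$ leaves $W$, so $W$ is terminal in the CRN. The main obstacle is the reverse inclusion in part (i), and the crux there is recognizing that subconservativity is useful only because $\mathbf{c}$ can be taken \emph{strictly} positive: a domination edge decreases at least one coordinate, and strict positivity of $\mathbf{c}$ upgrades this to a strict drop in $\phi$. Without that strictness a domination edge could in principle sit on a cycle and merge two strong components, so I expect the argument to hinge entirely on tracking where strict versus weak inequality holds.
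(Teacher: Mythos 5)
Your proof is correct and takes essentially the same approach as the paper's: both arguments use the strictly positive vector $\mathbf{c}$ with $\mathbf{c}^T \Gamma \leq \mathbf{0}^T$ to show that no directed cycle of the dom-CRN can contain a domination edge, with the strictness coming exactly where you locate it --- from $\mathbf{c} \in \mathbb{R}_{>0}^m$ combined with $y \neq y'$ in the definition of domination. The only difference is bookkeeping (the paper decomposes the cycle into $\mathcal{R}$-segments and evaluates $\mathbf{c}^T [\Gamma \alpha]$ for the reaction-count vector $\alpha$, whereas you telescope the potential $\phi(y) = \mathbf{c}^T y$ edge by edge), and part (ii) is the same observation in both.
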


\noindent We can interpret Lemma \ref{terminallemma} as saying that, for a subconservative CRN, the addition of domination edges does not create new cycles between SLCs since this would create new SLCs. 

\begin{example}
\label{example920}
Consider the CRN from Examples \ref{example234} and \ref{example919}. Recall that the CRN is conservative, and therefore subconservative, so that Lemma \ref{terminallemma} applies. The maximal dom-CRN is given by the following:
\begin{center}
\begin{tikzpicture}[auto, outer sep=3pt, node distance=2cm,>=latex']
\node (C1) {$X_1+X_2$};
\node [right of = C1, node distance = 2.5cm] (C2) {$2X_2$};
\node [below of = C2, node distance = 1.5cm] (C3) {$X_2$};
\node [left of = C3, node distance = 2.5cm] (C4) {$X_1$};
\path[->, bend left = 10] (C1) edge node [above left = -0.15cm] {\tiny $1$} (C2);
\path[->, bend left = 10] (C2) edge node [below right = -0.15cm] {\tiny $2$} (C1);
\path[->] (C3) edge node [above = -0.15cm] {\tiny $3$} (C4);
\path[dashed,->] (C1) edge [right] node {\tiny $D_1$} (C3);
\path[dashed,->] (C1) edge [left] node {\tiny $D_2$} (C4);
\path[dashed,->] (C2) edge [right] node {\tiny $D_3$} (C3);
\end{tikzpicture}
\end{center}
where we have indexed the domination relations for clarity. As guaranteed by Lemma \ref{terminallemma}, the SLCs of the CRN and dom-CRN coincide. Notice that the terminal complex $X_1$ in the dom-CRN above is terminal in the original CRN, but that the terminal complexes $X_1 + X_2$ and $2X_2$ in the CRN are not terminal in the dom-CRN.

Notice also that this dom-CRN is not admissible since the domination relations 
$X_1 + X_2 \to X_1$ leads to the terminal complex $X_1$. Consider instead the subset $\mathcal{D} = \{ X_1 + X_2 \to X_2, 2X_2 \to X_2 \} \subset \mathcal{D}^*$ which corresponds to the following dom-CRN:
\begin{center}
\begin{tikzpicture}[auto, outer sep=3pt, node distance=2cm,>=latex']
\node (C1) {$X_1+X_2$};
\node [right of = C1, node distance = 2.5cm] (C2) {$2X_2$};
\node [below of = C2, node distance = 1.5cm] (C3) {$X_2$};
\node [left of = C3, node distance = 2.5cm] (C4) {$X_1$};
\path[->, bend left = 10] (C1) edge node [above left = -0.15cm] {\tiny $1$} (C2);
\path[->, bend left = 10] (C2) edge node [below right = -0.15cm] {\tiny $2$} (C1);
\path[->] (C3) edge node [above = -0.15cm] {\tiny $3$} (C4);
\path[dashed,->] (C1) edge [right] node {\tiny $D_1$} (C3);
\path[dashed,->] (C2) edge [right] node {\tiny $D_2$} (C3);
\end{tikzpicture}
\end{center}
This dom-CRN is admissible since $\mathcal{D}$ contains no domination edges which lead to the terminal complex $X_1$. 
\end{example}


\begin{example}
Consider the CRN from Example \ref{example235} and \ref{example919}. Recall that the CRN is neither conservative nor subconservative. Thus, Lemma \ref{terminallemma} stands silent.  The maximal dom-CRN is given by the following:
\begin{center}
\begin{tikzpicture}[auto, outer sep=3pt, node distance=2cm,>=latex']
\node (C1) {$X_1$};
\node [right of = C1, node distance = 2cm] (C2) {$2X_2$};
\node [below of = C2, node distance = 1.5cm] (C3) {$X_2$};
\node [left of = C3, node distance = 2cm] (C4) {$2X_1$};
\path[->] (C1) edge node [above = -0.15cm] {\tiny $1$} (C2);
\path[->] (C3) edge node [above = -0.15cm] {\tiny $2$} (C4);
\path[dashed,->] (C2) edge [right] node {\tiny $D$} (C3);
\path[dashed,->] (C4) edge [left] node {\tiny $D$} (C1);
\end{tikzpicture}
\end{center}
We have that there is only one SLC in the dom-CRN, which is given by $\{ X_1, 2X_2, X_2, 2X_1 \}$, so that the SLCs of the CRN and dom-CRN do not coincide.
We can see, therefore, that Lemma \ref{terminallemma}  does not hold in general if we remove the subconservative assumption.
\end{example}

\subsection{$\mathcal{Y}$-Exterior Forests and Balancing Vectors}

The following concept is adapted from numerous sources in graph theory. Trees have been used extensively in CRNT \cite{J1,C-D-S-S} and the related notion of arborescences factored in \cite{Boros2013-1}.

\begin{definition}
\label{forest1}
Consider a CRN $(\mathcal{S},\mathcal{C},\mathcal{R})$ and a $\mathcal{Y}$-admissible dom-CRN $(\mathcal{S},\mathcal{C},\mathcal{R} \cup \mathcal{D})$ where $\mathcal{Y} \subseteq \mathcal{C}$ is an absorbing complex set on the dom-CRN. Then $(\mathcal{S},\mathcal{C},\mathcal{R}_F \cup \mathcal{D}_F)$ where $\mathcal{R}_F \subseteq \mathcal{R}$ and $\mathcal{D}_F \subseteq \mathcal{D}$ is called an \textbf{$\mathcal{Y}$-exterior forest} if, for every complex $y \not\in \mathcal{Y}$, there is a unique path in $\mathcal{R}_F \cup \mathcal{D}_F$ from $y$ to $\mathcal{Y}$.
\end{definition}


\noindent A $\mathcal{Y}$-exterior forest is a forest in the usual sense in graph theory after restricting to the $\mathcal{Y}$-exterior portion of the reaction graph of the dom-CRN. Note that Definition \ref{forest1} places no restrictions on $\mathcal{Y}$-interior reactions. By convention, we will include such reactions in every $\mathcal{Y}$-exterior forest.
If $\mathcal{Y}$ consists solely of the terminal complexes of the dom-CRN, we say $(\mathcal{S},\mathcal{C},\mathcal{R}_F \cup \mathcal{D}_F)$ is simply an \emph{exterior forest}.

We will be interested in particular in $\mathcal{Y}$-exterior forests which satisfy the following property.

\begin{definition}
\label{balancing}
Consider a CRN $(\mathcal{S},\mathcal{C},\mathcal{R})$ and a $\mathcal{Y}$-admissible dom-CRN $(\mathcal{S},\mathcal{C},\mathcal{R} \cup \mathcal{D})$ where $\mathcal{Y} \subseteq \mathcal{C}$ is an absorbing complex set on the dom-CRN. Let $d = |\mathcal{D}|$. Then a $\mathcal{Y}$-exterior forest $(\mathcal{S},\mathcal{C},\mathcal{R}_F \cup \mathcal{D}_F)$ is said to be \textbf{balanced} if there is a vector $\alpha = (\alpha_R,\alpha_D) \in \mathbb{Z}_{\geq 0}^{r+d}$ with $\alpha_k>0$ for at least one $\mathcal{Y}$-exterior reaction which satisfies:
\begin{enumerate}
\item
supp$(\alpha_R) \subseteq$ supp$(\mathcal{R}_F)$ and supp$(\alpha_D) \subseteq$ supp$(\mathcal{D}_F)$;
\item
$\alpha_R \in \ker(\Gamma)$; and
\item
for every $R_k = y \to y' \in \mathcal{R}_F \cup \mathcal{D}_F$ where $y \not\in \mathcal{Y}$, we have $\displaystyle{\alpha_k \geq \sum_{R_l \in \Theta(y)} \alpha_l}$ where $\Theta(y) = \{ R_l \in \mathcal{R}_F \cup \mathcal{D}_F \; | \; R_l = y'' \to y\}$.
\end{enumerate}
Otherwise, the $\mathcal{Y}$-exterior forest is said to be \textbf{unbalanced}.
\end{definition}

\noindent The third condition of Definition \ref{balancing} can be interpreted as saying that, for every $y \not\in \mathcal{Y}$, the weight of the outgoing edge in the $\mathcal{Y}$-exterior forest must be at least as large as the sum of all incoming edges. When taken together, the three conditions of Definition \ref{balancing} generate a set of equalities and inequalities on the edges of the dom-CRN. This suggests a computational implementation, which is investigated in the companion paper \cite{J2017}.


\begin{example}
\label{example921}
Recall the CRN taken from Examples \ref{example234}, \ref{example919}, and \ref{example920} and the admissible dom-CRN from Example \ref{example920}. This dom-CRN admits several exterior forests, for example the following substructures in bold red:
\begin{center}
\begin{tikzpicture}[auto, outer sep=3pt, node distance=2cm,>=latex']
\node (C11) {$X_1+X_2$};
\node [right of = C11, node distance = 2.5cm] (C21) {$2X_2$};
\node [below of = C21, node distance = 1.5cm] (C31) {$X_2$};
\node [left of = C31, node distance = 2.5cm] (C41) {$X_1$};
\node [right of = C21, node distance = 2.5cm] (C12) {$X_1+X_2$};
\node [right of = C12, node distance = 2.5cm] (C22) {$2X_2$};
\node [below of = C22, node distance = 1.5cm] (C32) {$X_2$};
\node [left of = C32, node distance = 2.5cm] (C42) {$X_1$};
\path[red,line width=0.75mm,->, bend left = 10] (C11) edge node [above left = -0.15cm] {\tiny $\mathbf{1}$} (C21);
\path[->, bend left = 10] (C21) edge node [below right = -0.15cm] {\tiny $2$} (C11);
\path[red,line width=0.75mm,->] (C31) edge node [above = -0.15cm] {\tiny $\mathbf{3}$} (C41);
\path[dashed,->] (C11) edge [right] node {\tiny $D_1$} (C31);
\path[red,line width=0.75mm,dashed,->] (C21) edge [right] node {\tiny $\mathbf{D_2}$} (C31);
\path[->, bend left = 10] (C12) edge node [above left = -0.15cm] {\tiny $1$} (C22);
\path[red,line width=0.75mm,->, bend left = 10] (C22) edge node [below right = -0.15cm] {\tiny $\mathbf{2}$} (C12);
\path[red,line width=0.75mm,->] (C32) edge node [above = -0.15cm] {\tiny $\mathbf{3}$} (C42);
\path[red,line width=0.75mm,dashed,->] (C12) edge [right] node {\tiny $ \mathbf{D_1}$} (C32);
\path[dashed,->] (C22) edge [right] node {\tiny $D_2$} (C32);
\end{tikzpicture}
\end{center}
Note that every nonterminal complex has a unique path to $X_1$. We now check whether these exterior forests are balanced by Definition \ref{balancing} by checking equalities and inequalities on the vector of edges of the following form:
\[\begin{array}{r} \mbox{\emph{reaction:}} \\ \hline \alpha = \end{array}\hspace{-0.025in}\underbrace{\begin{array}{lll} \; 1 & 2 & 3 \\ \hline ((\alpha_R)_1, & (\alpha_R)_2, & (\alpha_R)_3 , \end{array} }_{\alpha_R}\hspace{-0.025in}\underbrace{\begin{array}{ll} D_1 & D_2 \\ \hline \; (\alpha_D)_1, & \; (\alpha_D)_2).\end{array} }_{\alpha_D}\]
Note also that the stoichiometric matrix is given by
\[\Gamma = \left[ \begin{array}{ccc} -1 & 1 & 1 \\ 1 & -1 & -1 \end{array} \right].\]

\begin{enumerate}
\item
In order for the left exterior forest to be balanced, it is required that we find a vector $\alpha = ((\alpha_R)_1, (\alpha_R)_2, (\alpha_R)_3, (\alpha_D)_1, (\alpha_D)_2) \in \mathbb{R}_{\geq 0}^5$, $\alpha \not= \mathbf{0}$, satisfying:
\[\left\{ \; \; \begin{aligned} (\mbox{Cond. } 1): \; \; & \hspace{0.17in} (\alpha_R)_2 = 0, \; (\alpha_D)_1 = 0 \\ (\mbox{Cond. } 2): \; \; & -(\alpha_R)_1 + (\alpha_R)_2 + (\alpha_R)_3 = 0 \\ & \hspace{0.17in} (\alpha_R)_1 - (\alpha_R)_2 - (\alpha_R)_3 = 0 \\ (\mbox{Cond. } 3): \; \; & \hspace{0.17in} (\alpha_R)_3 \geq (\alpha_D)_2 \geq (\alpha_R)_1 \geq 0. \end{aligned} \right.\]
We can choose $(1,0,1,0,1)$ so that this is balanced exterior forest.
\item
In order for the right exterior forest to be balanced, it is required that we find a nontrivial vector $\alpha = ((\alpha_R)_1, (\alpha_R)_2, (\alpha_R)_3, (\alpha_D)_1, (\alpha_D)_2) \in \mathbb{R}_{\geq 0}^5$, $\alpha \not= \mathbf{0}$, satisfying:
\[\left\{ \; \; \begin{aligned} (\mbox{Cond. } 1): \; \; & \hspace{0.17in} (\alpha_R)_1 = 0, \; (\alpha_D)_2 = 0 \\ (\mbox{Cond. } 2): \; \; & -(\alpha_R)_1 + (\alpha_R)_2 + (\alpha_R)_3 = 0 \\ & \hspace{0.17in} (\alpha_R)_1 - (\alpha_R)_2 - (\alpha_R)_3 = 0 \\ (\mbox{Cond. } 3): \; \; & \hspace{0.17in} (\alpha_R)_3 \geq (\alpha_D)_1 \geq (\alpha_R)_2 \geq 0. \end{aligned} \right.\]
Substituting Condition 1 into Condition 2 gives $(\alpha_R)_2 + (\alpha_R)_3 = 0$ which is inconsistent with the requirement from Condition 3 that $(\alpha_R)_3 \geq (\alpha_R)_2 \geq 0$ and at least one entry be nonzero. It follows that this is an unbalanced exterior forest.
\end{enumerate}
\end{example}

\subsection{Conditions for Extinction Events}
\label{mainsection}

We now present the main results of this paper, which are inspired by Theorem 1 and Corollary 2 of \cite{Brijder}. The proof of Theorem \ref{mainresult} is contained in Appendix \ref{appendixb}.

\begin{theorem}
\label{mainresult}
Consider a subconservative CRN and a $\mathcal{Y}$-admissible dom-CRN where $\mathcal{Y} \subseteq \mathcal{C}$ is an absorbing complex set on the dom-CRN.  Suppose that there is a complex $y \not\in \mathcal{Y}$ of the dom-CRN which is recurrent from a state $\mathbf{X} \in \mathbb{Z}_{ \geq 0}^m$ in the discrete state space CRN. Then every $\mathcal{Y}$-exterior forest of the dom-CRN is balanced.
\end{theorem}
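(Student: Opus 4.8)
The plan is to argue by contradiction: suppose that some $\mathcal{Y}$-exterior forest $(\mathcal{S},\mathcal{C},\mathcal{R}_F \cup \mathcal{D}_F)$ is \emph{unbalanced} and yet some complex $y \notin \mathcal{Y}$ is recurrent from $\mathbf{X}$; I will derive a contradiction, which establishes the theorem in its contrapositive form. The first step is to read the three requirements of Definition \ref{balancing} as a single finite system over the non-negative forest-edge weights $\alpha$: condition 2 is the linear system $\Gamma \alpha_R = \mathbf{0}$, condition 3 is one super-flow inequality $\alpha_{e(y)} \ge \sum_{R_l \in \Theta(y)} \alpha_l$ per exterior complex $y$ (with $e(y)$ the unique outgoing forest edge), and the positivity requirement is the normalization that the total weight on the $\mathcal{Y}$-exterior reactions of the forest is at least one. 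Saying the forest is unbalanced is then exactly saying that this rational system has no solution with $\alpha \ge \mathbf{0}$.

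The second step is to dualize this infeasibility. By a theorem of the alternative (Farkas/Motzkin), the absence of a non-negative solution yields a certificate consisting of a species weighting $\mathbf{w} \in \mathbb{R}^m$ dual to condition 2 and non-negative multipliers $\mu_y \ge 0$ dual to the condition-3 inequalities. Extending $\mu$ by zero on $\mathcal{Y}$ and telescoping along the unique forest path from each complex to $\mathcal{Y}$, I would repackage these multipliers as a complex potential $\phi : \mathcal{C} \to \mathbb{R}_{\ge 0}$; the certificate then states precisely that the quantity $\mathbf{w}^T c + \phi(c)$ does not increase as one traverses any forest edge and strictly decreases across at least one $\mathcal{Y}$-exterior reaction of the forest. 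Since only forest edges are controlled so far, I would add a large multiple of the conservation vector $\mathbf{c} \in \mathbb{R}_{>0}^m$ supplied by subconservativity: because $\mathbf{c}^T \Gamma \le \mathbf{0}^T$ and $\mathbf{c} > \mathbf{0}$, this simultaneously forces the species part to be non-increasing under \emph{every} real reaction (not merely the forest ones) and bounded below on $\mathbb{Z}_{\ge 0}^m$, while preserving the strict decrease singled out by the certificate.

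The final step, which I expect to be the main obstacle, is to convert this static certificate on the complex graph into a genuine monovariant $\Phi$ on the discrete dynamics and to verify that re-charging the recurrent complex $y$ forces a strict decrease. The difficulty is twofold: $\phi$ is a potential on complexes whereas a state $\mathbf{X}$ may charge many complexes at once, and the forest contains domination edges $y \to y'$ that are not reactions and never actually fire. Here I would exploit the defining domination property $y' \le y$, which makes the set of states charging $y$ a subset of those charging $y'$, so that routing potential ``downhill'' along a domination edge is always available whenever its source is charged and the telescoped inequalities transfer faithfully to actual states. Granting a monovariant $\Phi$ that is non-increasing along every transition, bounded below, and strictly decreasing along the re-charging transitions, finiteness of the state space of a subconservative CRN (noted after Definition \ref{recurrence}) caps the number of such strict decreases; but recurrence of the exterior complex $y$, which by Lemma \ref{terminallemma} is non-terminal and hence has an outgoing $\mathcal{Y}$-exterior reaction, lets the process return to a state charging $y$ and fire that reaction again and again, producing infinitely many strict decreases. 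This contradiction completes the proof, and pinning down the exact form of $\Phi$ so that the strictness lands on precisely the transitions used by recurrence is the delicate point to be handled in the appendix.
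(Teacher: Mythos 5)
Your strategy---prove the contrapositive via a Farkas-type certificate for the infeasibility of the system in Definition \ref{balancing}, then convert that certificate into a Lyapunov-type monovariant on the state space---is genuinely different from the paper's proof, which works entirely in the primal direction. There, the recurrence of $y \not\in \mathcal{Y}$ is used to \emph{construct} an indefinitely extendable reaction sequence: at each stage one waits for the first $\mathcal{Y}$-exterior complex to become charged, then fires the true reactions along its unique forest path into $\mathcal{Y}$ (domination edges transfer chargedness for free since $y' \leq y$); finiteness of the reachable state space for subconservative CRNs (noted after Definition \ref{recurrence}) forces a repeated state, and the vector of edge counts over the repeating cycle \emph{is} the balancing vector: Condition 2 of Definition \ref{balancing} holds because the state returns to itself in Eqn.\ (\ref{markov}), and Condition 3 holds because of the ``first exterior complex charged'' bookkeeping. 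No duality is invoked, and no function monotone along \emph{all} transitions ever needs to be exhibited.

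That last point is exactly where your proposal has a genuine gap, and you flag it yourself: the conversion of the static certificate into a monovariant $\Phi$ that is non-increasing along every transition the process can make is deferred, and it is the crux of your argument, not a detail to be ``handled in the appendix.'' The certificate constrains only forest edges, while the return legs of any recurrence loop that recharges $y$ may use arbitrary non-forest reactions. Your proposed bridge---adding a large multiple of the conservation vector $\mathbf{c}$---provably does nothing in the conservative case: if $\mathbf{c}^T \Gamma = \mathbf{0}^T$, then $(\mathbf{w} + M\mathbf{c})^T(y'-y) = \mathbf{w}^T(y'-y)$ for every reaction and every $M$, so non-forest reactions along which $\mathbf{w}$ increases remain uncontrolled; note that the paper's motivating examples (Examples \ref{example234} and \ref{example1}) are conservative, so the device fails precisely on the intended applications. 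Two further slips: a complex $y \not\in \mathcal{Y}$ need not have an outgoing $\mathcal{Y}$-exterior \emph{reaction} at all (its only outgoing edge in the dom-CRN may be a domination edge, which never fires), so ``fire that reaction again and again'' is unjustified; and the strict decrease of $\Phi$ must be shown to land on transitions the constructed trajectory actually uses, which is again the unproven step. Until $\Phi$ is exhibited, the contradiction does not materialize; the paper's primal construction avoids all of these obstructions, which is a strong reason to prefer it.
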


\noindent This result places restrictions on the structure of a subconservative CRN that does not experience a guaranteed extinction event. We will be more frequently interested in when discrete extinction occurs, and therefore present the following corollary which follows immediately as the contrapositive of Theorem \ref{mainresult}.

\begin{corollary}
\label{maincorollary}
Consider a subconservative CRN and a $\mathcal{Y}$-admissible dom-CRN where $\mathcal{Y} \subseteq \mathcal{C}$ is an absorbing complex set on the dom-CRN. Suppose there is a $\mathcal{Y}$-exterior forest of the dom-CRN which is unbalanced. Then the discrete state space CRN has a guaranteed extinction event on $\mathcal{Y}^c$. 
\end{corollary}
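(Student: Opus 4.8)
The plan is to obtain the Corollary directly as the logical contrapositive of Theorem \ref{mainresult}; no new dynamical content is required, and the whole task reduces to negating the hypothesis and conclusion of the Theorem and matching each negation to the relevant definition. I would begin by writing Theorem \ref{mainresult} schematically as an implication $P \Rightarrow Q$, in which $P$ is the existential assertion ``there exist a complex $y \notin \mathcal{Y}$ of the dom-CRN and a state $\mathbf{X} \in \mathbb{Z}_{\geq 0}^m$ with $y$ recurrent from $\mathbf{X}$,'' and $Q$ is the universal assertion ``every $\mathcal{Y}$-exterior forest of the dom-CRN is balanced.'' The contrapositive is $\neg Q \Rightarrow \neg P$, and I would note at once that $\neg Q$ is verbatim the hypothesis of the Corollary: some $\mathcal{Y}$-exterior forest fails Definition \ref{balancing}, that is, is unbalanced.

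The crux is then to show that $\neg P$ is exactly the stated conclusion. Negating both existential quantifiers in $P$ gives: for every complex $y \notin \mathcal{Y}$ and every state $\mathbf{X} \in \mathbb{Z}_{\geq 0}^m$, the complex $y$ is not recurrent from $\mathbf{X}$, which by Definition \ref{turnedon} means $y$ is transient from $\mathbf{X}$. I would then fix an arbitrary state $\mathbf{X}$: the clause ``every $y \in \mathcal{Y}^c$ is transient from $\mathbf{X}$'' is precisely an extinction event on $\mathcal{Y}^c$ from $\mathbf{X}$. Since $\mathbf{X}$ was arbitrary, extinction occurs from every state, which is by definition a guaranteed extinction event on $\mathcal{Y}^c$. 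This is the conclusion of the Corollary, so the contrapositive is established.

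The subconservativity and $\mathcal{Y}$-admissibility hypotheses carry over unchanged from Theorem \ref{mainresult} and need no separate treatment, so there is essentially no computational obstacle in the Corollary itself. The one point demanding care is purely logical: I must verify that negating the single existential ``recurrent from some state'' yields the doubly universal ``transient from every state,'' and that the resulting per-state extinction clauses aggregate into a guaranteed extinction event precisely because $\mathbf{X}$ ranges freely. The genuine difficulty of the result---deducing global transience from the existence of an unbalanced forest---lives entirely in the proof of Theorem \ref{mainresult} in Appendix \ref{appendixb}, and is already discharged there.
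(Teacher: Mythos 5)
Your proposal is correct and follows exactly the paper's own route: the paper derives Corollary \ref{maincorollary} as the immediate contrapositive of Theorem \ref{mainresult}, and your careful unpacking of the negations---non-recurrence from every state equals transience by Definition \ref{turnedon}, which aggregated over all states and all $y \in \mathcal{Y}^c$ is precisely a guaranteed extinction event on $\mathcal{Y}^c$---is the same bookkeeping the paper leaves implicit.
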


\noindent Recall that an exterior forest is unbalanced if there is a set of equalities and inequalities on the edges of the dom-CRN which cannot be satisfied. The question of determining sufficient conditions for discrete extinction is therefore reduced to determining the feasibility of particular sets of equalities and inequalities.

Notice also that, even if a CRN permits many $\mathcal{Y}$-exterior forests, it is sufficient for a single one to be unbalanced for an extinction event to follow. Furthermore, the set of transient complexes corresponds to the set of complexes not in $\mathcal{Y}$. Note that this may contain terminal complexes in the original CRN (see Example \ref{example920}).

\begin{remark}
By convention, when applying Corollary \ref{maincorollary}, if no mention of an absorbing complex set $\mathcal{Y} \subseteq \mathcal{C}$ is made, it is assumed to be the set of terminal complexes in the dom-CRN.
\end{remark}

\begin{example}
Reconsider the CRN analyzed in Example \ref{example234}, \ref{example919}, and \ref{example920}. This CRN is conservative, and in Example \ref{example921} we showed that there is an admissible dom-CRN with an unbalanced exterior forest. It follows from Corollary \ref{maincorollary} that the discrete state space CRN has a guaranteed extinction event on the set of nonterminal complexes of the dom-CRN. That is, from all states $\mathbf{X} \in \mathbb{Z}_{\geq 0}^m$, there is guaranteed to be a time after which the count of the species is insufficient for any reaction from the complexes $X_1 + X_2$, $2X_2$, and $X_2$ to occur. This is consistent with our earlier observation that the state $\{ \mathbf{X}_1 = M, \mathbf{X}_2 = 0 \}$ where $M = \mathbf{X}_1(0) + \mathbf{X}_2(0)$ absorbs all trajectories through repeated application of the reactions $2X_2 \to X_1 + X_2$ and $X_2 \to X_1$. Notice that this pathway consists of the true reactions in the unbalanced exterior forest.
\end{example}

\subsection{EnvZ-OmpR Signaling Pathway}

In this section, we consider a CRN which was proposed as underlying the EnvZ/OmpR signaling pathway in \emph{Escherichia coli} in \cite{Sh-F}. 
This CRN has been studied previously with a discrete state space in the papers \cite{A-E-J,Brijder} where it was shown to exhibit a guarantee extinction event. We reconsider the CRN here to demonstrate the process of applying Corollary \ref{maincorollary} and also to demonstrate the advantages of our approach. In particular, the graphical method of constructing $\mathcal{Y}$-exterior forests suggests the pathways to extinction in the CRN.

\begin{example}
\label{example1}
Consider the following reaction mechanism, which was proposed by G. Shinar and M. Feinberg as underlying the EnvZ/OmpR signaling pathway in \emph{Escherichia coli} in the Supplemental Material of \cite{Sh-F}:
\begin{center}
\begin{tikzpicture}[auto, outer sep=3pt, node distance=2cm,>=latex']
\node (X1) {$X_1$};
\node [right of = X1, node distance = 2.5cm] (X2) {$X_2$};
\node [right of = X2, node distance = 2.5cm] (X3) {$X_3$};
\node [right of = X3, node distance = 2.5cm] (X4) {$X_4$};
\node [below of = X1, node distance = 1cm] (X4X5) {$X_4 + X_5$};
\node [right of = X4X5, node distance = 2.5cm] (X6) {$X_6$};
\node [right of = X6, node distance = 2.5cm] (X2X7) {$X_2 + X_7$};
\node [below of = X4X5, node distance = 1cm] (X3X7) {$X_3 + X_7$};
\node [right of = X3X7, node distance = 2.5cm] (X8) {$X_8$};
\node [right of = X8, node distance = 2.5cm] (X3X5) {$X_3 + X_5$};
\node [below of = X3X7, node distance = 1cm] (X1X7) {$X_1 + X_7$};
\node [right of = X1X7, node distance = 2.5cm] (X9) {$X_9$};
\node [right of = X9, node distance = 2.5cm] (X1X5) {$X_1 + X_5$};
\path[->,bend left=10] (X1) edge node [above left = -0.15cm] {\tiny $1$} (X2);
\path[->,bend left=10] (X2) edge node [below right = -0.15cm] {\tiny $2$} (X1);
\path[->,bend left=10] (X2) edge node [above left = -0.15cm] {\tiny $3$} (X3);
\path[->,bend left=10] (X3) edge node [below right = -0.15cm] {\tiny $4$} (X2);
\path[->] (X3) edge node [above = -0.15cm] {\tiny $5$} (X4);
\path[->,bend left=10] (X4X5) edge node [above left = -0.15cm] {\tiny $6$} (X6);
\path[->,bend left=10] (X6) edge node [below right = -0.15cm] {\tiny $7$} (X4X5);
\path[->] (X6) edge node [above = -0.15cm] {\tiny $8$} (X2X7);
\path[->,bend left=10] (X3X7) edge node [above left = -0.15cm] {\tiny $9$} (X8);
\path[->,bend left=10] (X8) edge node [below right = -0.15cm] {\tiny $10$} (X3X7);
\path[->] (X8) edge node [above = -0.15cm] {\tiny $11$} (X3X5);
\path[->,bend left=10] (X1X7) edge node [above left = -0.15cm] {\tiny $12$} (X9);
\path[->,bend left=10] (X9) edge node [below right = -0.15cm] {\tiny $13$} (X1X7);
\path[->] (X9) edge node [above = -0.15cm] {\tiny $14$} (X1X5);
\end{tikzpicture}
\end{center}
\noindent where $X_1 = \mbox{\emph{EnvZ-ADP}}$, $X_2 = \mbox{\emph{EnvZ}}$, $X_3 = \mbox{\emph{EnvZ-ATP}}$, $X_4 = \mbox{\emph{EnvZ}}_p$, $X_5 = \mbox{\emph{OmpR}}$, $X_6 = \mbox{\emph{EnvZ}}_p\mbox{\emph{-OmpR}}$, $X_7 = \mbox{\emph{OmpR}}_p$, $X_8 = \mbox{\emph{EnvZ-ATP-OmpR}}_p$, $X_9 = \mbox{\emph{EnvZ-ADP-OmpR}}_p$.

Consider the admissible dom-CRN with $\mathcal{D} = \{ X_1 + X_5 \stackrel{D_1}{\longrightarrow} X_1, X_1 + X_7 \stackrel{D_2}{\longrightarrow} X_1, X_2+X_7 \stackrel{D_3}{\longrightarrow} X_2, X_3 + X_5 \stackrel{D_4}{\longrightarrow} X_3, X_3 + X_7 \stackrel{D_5}{\longrightarrow} X_3\}$. The dom-CRN may be graphically represented as:

\begin{center}
\begin{tikzpicture}[auto, outer sep=3pt, node distance=2cm,>=latex']
\node (X1X7) {$X_1 + X_7$};
\node [below of = X1X7, node distance = 1.5cm] (X9) {$X_9$};
\node [below of = X9, node distance = 1.5cm] (X1X5) {$X_1 + X_5$};
\node [right of = X9, node distance = 2.5cm] (X1) {$X_1$};
\node [right of = X1, node distance = 2.5cm] (X2) {$X_2$};
\node [right of = X2, node distance = 2.5cm] (X3) {$X_3$};
\node [right of = X3, node distance = 2.5cm] (X4) {$X_4$};
\node [below of = X2, node distance = 1.5cm] (X3X7) {$X_3 + X_7$};
\node [right of = X3X7, node distance = 2.5cm] (X8) {$X_8$};
\node [right of = X8, node distance = 2.5cm] (X3X5) {$X_3 + X_5$};
\node [above of = X4, node distance = 1.5cm] (X4X5) {$X_4 + X_5$};
\node [left of = X4X5, node distance = 2.5cm] (X6) {$X_6$};
\node [left of = X6, node distance = 2.5cm] (X2X7) {$X_2 + X_7$};
\path[->,bend left=10] (X1) edge node [above left = -0.15cm] {\tiny $1$} (X2);
\path[->,bend left=10] (X2) edge node [below right = -0.15cm] {\tiny $2$} (X1);
\path[->,bend left=10] (X2) edge node [above left = -0.15cm] {\tiny $3$} (X3);
\path[->,bend left=10] (X3) edge node [below right = -0.15cm] {\tiny $4$} (X2);
\path[->] (X3) edge node [above = -0.15cm] {\tiny $5$} (X4);
\path[->,bend left=10] (X4X5) edge node [below right = -0.15cm] {\tiny $6$} (X6);
\path[->,bend left=10] (X6) edge node [above left = -0.15cm] {\tiny $7$} (X4X5);
\path[->] (X6) edge node [above = -0.15cm] {\tiny $8$} (X2X7);
\path[->,bend left=10] (X3X7) edge node [above left = -0.15cm] {\tiny $9$} (X8);
\path[->,bend left=10] (X8) edge node [below right = -0.15cm] {\tiny $10$} (X3X7);
\path[->] (X8) edge node [above = -0.15cm] {\tiny $11$} (X3X5);
\path[->,bend left=10] (X1X7) edge node [right = -0.1cm] {\tiny $12$} (X9);
\path[->,bend left=10] (X9) edge node [left = -0.1cm] {\tiny $13$} (X1X7);
\path[->] (X9) edge node [right = -0.1cm] {\tiny $14$} (X1X5);
\path[dashed,->] (X1X7) edge [above right = -0.15cm] node {\tiny $D_2$} (X1);
\path[dashed,->] (X1X5) edge [below right = -0.15cm] node {\tiny $D_1$} (X1);
\path[dashed,->] (X3X7) edge [left] node {\tiny $D_5$} (X3);
\path[dashed,->] (X3X5) edge [right] node {\tiny $D_4$} (X3);
\path[dashed,->] (X2X7) edge [right = -0.15cm] node {\tiny $D_3$} (X2);
\end{tikzpicture}
\end{center}

\noindent Consider furthermore the following exterior forest:
\begin{center}
\begin{tikzpicture}[auto, outer sep=3pt, node distance=2cm,>=latex']
\node (X1X7) {$X_1 + X_7$};
\node [below of = X1X7, node distance = 1.5cm] (X9) {$X_9$};
\node [below of = X9, node distance = 1.5cm] (X1X5) {$X_1 + X_5$};
\node [right of = X9, node distance = 2.5cm] (X1) {$X_1$};
\node [right of = X1, node distance = 2.5cm] (X2) {$X_2$};
\node [right of = X2, node distance = 2.5cm] (X3) {$X_3$};
\node [right of = X3, node distance = 2.5cm] (X4) {$X_4$};
\node [below of = X2, node distance = 1.5cm] (X3X7) {$X_3 + X_7$};
\node [right of = X3X7, node distance = 2.5cm] (X8) {$X_8$};
\node [right of = X8, node distance = 2.5cm] (X3X5) {$X_3 + X_5$};
\node [above of = X4, node distance = 1.5cm] (X4X5) {$X_4 + X_5$};
\node [left of = X4X5, node distance = 2.5cm] (X6) {$X_6$};
\node [left of = X6, node distance = 2.5cm] (X2X7) {$X_2 + X_7$};
\path[->,line width=0.75mm,bend left=10,red] (X1) edge node [above left = -0.15cm] {\tiny $\mathbf{1}$} (X2);
\path[->,bend left=10] (X2) edge node [below right = -0.15cm] {\tiny $2$} (X1);
\path[->,line width=0.75mm,bend left=10,red] (X2) edge node [above left = -0.15cm] {\tiny $\mathbf{3}$} (X3);
\path[->,bend left=10] (X3) edge node [below right = -0.15cm] {\tiny $4$} (X2);
\path[->,line width=0.75mm,red] (X3) edge node [above = -0.15cm] {\tiny $\mathbf{5}$} (X4);
\path[->,line width=0.75mm,bend left=10,red] (X4X5) edge node [below right = -0.15cm] {\tiny $\mathbf{6}$} (X6);
\path[->,bend left=10] (X6) edge node [above left = -0.15cm] {\tiny $7$} (X4X5);
\path[->,line width=0.75mm,red] (X6) edge node [above = -0.15cm] {\tiny $\mathbf{8}$} (X2X7);
\path[->,bend left=10] (X3X7) edge node [above left = -0.15cm] {\tiny $9$} (X8);
\path[->,line width=0.75mm,bend left=10,red] (X8) edge node [below right = -0.15cm] {\tiny $\mathbf{10}$} (X3X7);
\path[->] (X8) edge node [above = -0.15cm] {\tiny $11$} (X3X5);
\path[->,bend left=10] (X1X7) edge node [right = -0.1cm] {\tiny $12$} (X9);
\path[->,line width=0.75mm,bend left=10,red] (X9) edge node [left = -0.1cm] {\tiny $\mathbf{13}$} (X1X7);
\path[->] (X9) edge node [right = -0.1cm] {\tiny $14$} (X1X5);
\path[dashed,line width=0.75mm,->,red] (X1X7) edge [above right] node {\tiny $\mathbf{D_2}$} (X1);
\path[dashed,line width=0.75mm,->,red] (X1X5) edge [below right] node {\tiny $\mathbf{D_1}$} (X1);
\path[dashed,line width=0.75mm,->,red] (X3X7) edge [left] node {\tiny $\mathbf{D_5}$} (X3);
\path[dashed,line width=0.75mm,->,red] (X3X5) edge [right] node {\tiny $\mathbf{D_4}$} (X3);
\path[dashed,line width=0.75mm,->,red] (X2X7) edge [right = -0.15cm] node {\tiny $\mathbf{D_3}$} (X2);
\end{tikzpicture}
\end{center}

\noindent In the highlighted structure (bold red), there is a unique path from every complex to the terminal complex $X_4$. It can be seen directly that this exterior forest is unbalanced by noting that we need a vector $\alpha = (\alpha_R,\alpha_D) \in \mathbb{Z}_{\geq 0}^{19}$, $\alpha \not= \mathbf{0}$, which has support on a subset of the red highlighted structure above. To satisfy Condition 2 of Definition \ref{balancing}, we need to satisfy $\alpha_R \in \ker(\Gamma)$. We can check that $\ker(\Gamma) \cap \mathbb{R}_{\geq 0}^r$ has the generators:
	\[\begin{array}{rllllllllllllll} \mbox{\emph{reaction:}} & \; 1 & 2 & 3 & 4 & 5 & 6 & 7 & 8 & 9 & 10 & 11 & 12 & 13 & 14 \\ \hline \\[-0.2cm]
\ker(\Gamma) \cap \mathbb{R}_{\geq 0}^r = \{ & (1, & 1, & 0, & 0, & 0, & 0, & 0, & 0, & 0, & \;0, & \; 0, & \; 0, & \;0, & \; 0),\\
& (0, & 0, & 1, & 1, & 0, & 0, & 0, & 0, & 0, & \;0, & \; 0, & \; 0, & \;0, & \; 0),\\
& (0, & 0, & 0, & 0, & 0, & 1, & 1, & 0, & 0, & \;0, & \; 0, & \; 0, & \;0, & \; 0),\\
& (0, & 0, & 0, & 0, & 0, & 0, & 0, & 0, & 1, & \;1, & \; 0, & \; 0, & \;0, & \; 0),\\
& (0, & 0, & 0, & 0, & 0, & 0, & 0, & 0, & 0, & \;0, & \; 0, & \; 1, & \;1, & \; 0),\\
& (0, & 0, & 1, & 0, & 1, & 1, & 0, & 1, & 1, & \;0, & \; 1, & \; 0, & \;0, & \; 0),\\
& (0, & 0, & 1, & 0, & 1, & 1, & 0, & 1, & 0, & \;0, & \; 0, & \; 1, & \;0, & \; 1) \; \; \;\}
\end{array}\]
\noindent The first five vectors correspond to reversible reaction pairs in the CRN and so may be ignored. In order to obtain a nontrivial vector $\alpha_R$, we require $(\alpha_R)_5 > 0$. To build such a vector using the sixth vector yields a vector with support on $(\alpha_R)_{11}$ while building it out of the seventh vector yields a vector with support on $(\alpha_R)_{14}$. Neither of these options is consistent with Condition 1 of Definition \ref{balancing} so that the exterior forest is unbalanced. It follows by Corollary \ref{maincorollary} that the discrete state space CRN has a guaranteed extinction event, and that every complex except $X_4$ is transient. In fact, all trajectories are absorbed by a state where $\mathbf{X}_4 > 0$, $\mathbf{X}_7 > 0$, and $\mathbf{X}_i = 0$ for $i \in \{ 1, 2, 3, 5, 6, 8, 9\}$.

This result was previously obtained in \cite{A-E-J} and also proved for a simplified CRN in \cite{Brijder}. The construction of the dom-CRN, and computational implementation, is unique. This method also suggests a pathway toward extinction through the reactions in the unbalanced exterior forest. Such a pathway was not apparent by the methods of either \cite{A-E-J} or \cite{Brijder}.

\end{example}

\subsection{Further Examples}

In this section, we provide further examples which demonstrate how to apply Corollary \ref{maincorollary}, and also demonstrate the necessity of several of the technical conditions required of the result. Example \ref{example000} presents a CRN which can be shown to have an extinction event for an absorbing complex set $\mathcal{Y} \subseteq \mathcal{C}$ which is not the set of terminal complexes in the dom-CRN. Example \ref{example001} presents a CRN which does not have a guaranteed extinction event, but which can be shown to have an unbalanced exterior forest if we do not insist on the underlying dom-CRN being admissible. Example \ref{example999} demonstrates that including Condition 3 of Definition \ref{balancing} allows further classification of CRNs with extinction events than would be possible otherwise. Examples \ref{example100} and \ref{example101} provide CRNs which show that the conditions of Corollary \ref{maincorollary} are sufficient, but not necessary, for a guaranteed extinction event to occur.

\begin{example}
\label{example000}
It is natural to wonder whether, when applying Corollary \ref{maincorollary}, there is an advantage to generalizing the set of terminal complexes to an absorbing complex set $\mathcal{Y} \subseteq \mathcal{C}$. To show that there is, consider the following CRN:
\begin{center}
\begin{tikzpicture}[auto, outer sep=3pt, node distance=2cm,>=latex']
\node (C1) {$2X_1$};
\node [right of = C1, node distance = 2.5cm] (C2) {$X_2+X_3$};
\node [right of = C2, node distance = 2.5cm] (C3) {$2X_3$};
\node [right of = C3, node distance = 2.5cm] (C4) {$2X_2$};
\path[->] (C1) edge node [above = -0.15cm] {\tiny $1$} (C2);
\path[->] (C2) edge node [above = -0.15cm] {\tiny $2$} (C3);
\path[->,bend left = 10] (C3) edge node [above left = -0.15cm] {\tiny $3$} (C4);
\path[->,bend left = 10] (C4) edge node [below right = -0.15cm] {\tiny $4$} (C3);
\end{tikzpicture}
\end{center}
There are no domination relations so that the only dom-CRN corresponds to the CRN shown, and it is trivially admissible. The only exterior forest consists of all reactions. Notable, it contains reactions 1 and 2 on the nonterminal component. We can easily determine that $\alpha = (\alpha_1,\alpha_2,\alpha_3,\alpha_4) = (0,2,1,0)$ satisfies the conditions of Definition \ref{balancing} and therefore that this exterior forest is balanced. Therefore, Corollary \ref{maincorollary} does not apply and we may not conclude that an extinction event occurs.

Consider instead taking $\mathcal{Y} = \{ X_2 + X_3, 2X_3, 2X_2 \}$. This set is absorbing and contains every terminal complex of the CRN. The only exterior forest again contains all reactions but only reaction 1 is $\mathcal{Y}$-exterior. Since there is no balancing vector $\alpha$ for which $\alpha_1 \not=0$, we may conclude by Corollary \ref{maincorollary} that there is a guaranteed extinction event on $\mathcal{Y}^c = \{ 2X_1 \}$. In fact, we can see this directly since repeated application of reaction 1 will deplete $X_1$ and there are no pathways by which to replenish it.

\end{example}

\begin{example}
\label{example001}
It is natural to wonder whether it is necessary to insist on dom-CRNs being admissible. To show that removing this assumption from Corollary \ref{maincorollary} can lead to misclassification, consider the following CRN:
\begin{center}
\begin{tikzpicture}[auto, outer sep=3pt, node distance=2cm,>=latex']
\node  (C1) {$2X_1$};
\node [right of = C1, node distance = 2.5cm] (C2) {$2X_2$};
\node [right of = C2, node distance = 2.5cm] (C3) {$X_2$};
\node [right of = C3, node distance = 2.5cm] (C4) {$X_3$};
\path[->,bend left=10] (C1) edge node [above left = -0.15cm] {\tiny $1$} (C2);
\path[->,bend left=10] (C2) edge node [below right = -0.15cm] {\tiny $2$} (C1);
\path[->,bend left=10] (C3) edge node [above left = -0.15cm] {\tiny $3$} (C4);
\path[->,bend left=10] (C4) edge node [below right = -0.15cm] {\tiny $4$} (C3);
\end{tikzpicture}
\end{center}
The CRN has only the single domination relation $X_2 \leq 2X_2$. Since the corresponding domination relation $2X_2 \to X_2$ leads to a terminal component in any resulting dom-CRN, we may not add it, so that the only admissible dom-CRN corresponds to the original CRN.

Suppose, however, that we do not insist on dom-CRNs being admissible. Specifically, suppose we allow the following dom-CRN:
\begin{center}
\begin{tikzpicture}[auto, outer sep=3pt, node distance=2cm,>=latex']
\node  (C1) {$2X_1$};
\node [right of = C1, node distance = 2.5cm] (C2) {$2X_2$};
\node [right of = C2, node distance = 2.5cm] (C3) {$X_2$};
\node [right of = C3, node distance = 2.5cm] (C4) {$X_3$};
\path[->,bend left=10] (C1) edge node [above left = -0.15cm] {\tiny $1$} (C2);
\path[->,bend left=10] (C2) edge node [below right = -0.15cm] {\tiny $2$} (C1);
\path[->,bend left=10] (C3) edge node [above left = -0.15cm] {\tiny $3$} (C4);
\path[->,bend left=10] (C4) edge node [below right = -0.15cm] {\tiny $4$} (C3);
\path[dashed,->] (C2) edge node [above = -0.15cm] {\tiny $D$} (C3);
\end{tikzpicture}
\end{center}
The only exterior forest is given in bold red as follows:
\begin{center}
\begin{tikzpicture}[auto, outer sep=3pt, node distance=2cm,>=latex']
\node  (C1) {$2X_1$};
\node [right of = C1, node distance = 2.5cm] (C2) {$2X_2$};
\node [right of = C2, node distance = 2.5cm] (C3) {$X_2$};
\node [right of = C3, node distance = 2.5cm] (C4) {$X_3$};
\path[red,line width=0.75mm,->,bend left=10] (C1) edge node [above left = -0.15cm] {\tiny $\mathbf{1}$} (C2);
\path[->,bend left=10] (C2) edge node [below right = -0.15cm] {\tiny $2$} (C1);
\path[red,line width=0.75mm,->,bend left=10] (C3) edge node [above left = -0.15cm] {\tiny $\mathbf{3}$} (C4);
\path[red,line width=0.75mm,->,bend left=10] (C4) edge node [below right = -0.15cm] {\tiny $\mathbf{4}$} (C3);
\path[red,line width=0.75mm,dashed,->] (C2) edge node [above = -0.15cm] {\tiny $\mathbf{D}$} (C3);
\end{tikzpicture}
\end{center}
Notice that we have included the terminal reactions in the exterior forest. In order to be balanced, we must find a vector $\alpha = (\alpha_1, \alpha_2, \alpha_3, \alpha_4, \alpha_D)$ which is nonzero on at least one of the nonterminal reactions $\alpha_1$ and $\alpha_2$, such that
\[\left\{ \; \; \begin{aligned} (\mbox{Cond.} \; 1): \; \; & \hspace{0.17in}  \alpha_2 = 0 \\ (\mbox{Cond.} \; 2): \; \; &  -2\alpha_1 + 2\alpha_2 = 0 \\ & \hspace{0.17in} 2\alpha_1 - 2\alpha_2 - \alpha_3 + \alpha_4 = 0 \\ & \hspace{0.17in} \alpha_3 - \alpha_4 = 0 \\ (\mbox{Cond.}\; 3): \; \; & \hspace{0.17in} \alpha_D \geq \alpha_1 \geq 0. \end{aligned} \right.\]
Conditions 1 and 2 imply that $\alpha_1 = 0$ so that $\alpha$ is does not have support on the nonterminal portion of the dom-CRN. It follows that the exterior forest is unbalanced. Note, however, that Corollary \ref{maincorollary} remains silent since the presented dom-CRN is not admissible. Since all reactions of the discrete state space CRN are recurrent whenever $X_1(0) + X_2(0) + X_3(0) \geq 3$, this example highlights the importance of the assumption that dom-CRNs be admissible.

\end{example}


\begin{example}
\label{example999}
It is natural to wonder whether Condition 3 of Definition \ref{balancing} is useful in classifying discrete state space CRNs with extinction events. To see that it can be, consider the following CRN:
\begin{center}
\begin{tikzpicture}[auto, outer sep=3pt, node distance=2cm,>=latex']
\node  (C4) {$X_1+X_2$};
\node [right of = C4, node distance = 2.5cm] (C5) {$2X_1$};
\node [right of = C5, node distance = 2.5cm] (C6) {$2X_2$};
\path[->,bend left=10] (C4) edge node [above left = -0.15cm] {\tiny $1$} (C5);
\path[->,bend left=10] (C5) edge node [below right = -0.15cm] {\tiny $2$} (C4);
\path[->] (C5) edge node [above = -0.15cm] {\tiny $3$} (C6);
\end{tikzpicture}
\end{center}
There are no domination relations so the dom-CRN coincides with the original CRN. We have only the following exterior forest in bold red:
\begin{center}
\begin{tikzpicture}[auto, outer sep=3pt, node distance=2cm,>=latex']
\node (C1) {$X_1+X_2$};
\node [right of = C1, node distance = 2.5cm] (C2) {$2X_1$};
\node [right of = C2, node distance = 2.5cm] (C3) {$2X_2.$};
\path[->,line width=0.75mm,red,bend left=10] (C1) edge node [above left = -0.15cm] {\tiny $\mathbf{1}$} (C2);
\path[->,bend left=10] (C2) edge node [below right = -0.15cm] {\tiny $2$} (C1);
\path[->,line width=0.75mm,red] (C2) edge node [above = -0.15cm] {\tiny $\mathbf{3}$} (C3);
\end{tikzpicture}
\end{center}
In order for this exterior forest to be balanced, we need to have a vector $\alpha = (\alpha_1, \alpha_2, \alpha_3)$, $\alpha \not= \mathbf{0}$, which satisfies the following equalities and inequalities:
\[\left\{ \; \; \begin{aligned} (\mbox{Cond.} \; 1): \; \; & \hspace{0.17in} \alpha_2 = 0 \\ (\mbox{Cond.} \; 2): \; \; &  \hspace{0.17in}\alpha_1 - \alpha_2  -2 \alpha_3 = 0 \\ & -\alpha_1 + \alpha_2 + 2 \alpha_3 = 0 \\ (\mbox{Cond.} \; 3): \; \; & \hspace{0.17in} \alpha_3 \geq \alpha_1 \geq 0. \end{aligned} \right.\]
Condition 1 reduces Condition 2 to $\alpha_1 = 2 \alpha_3$, so that, combining with Condition 3, we have
\[\alpha_3 \geq \alpha_1 = 2\alpha_3 \geq 0.\]
This can only be satisfied by $\alpha_1 = 0$ and $\alpha_3 = 0$, which is a violation. It follows that the exterior forest is unbalanced and therefore, by Corollary \ref{maincorollary}, the discrete state space CRN has a guaranteed extinction event on the nonterminal complexes $X_1 + X_2$ and $2X_1$. Note, however, that the vector $\alpha = (2,0,1)$ satisfies conditions 1 and 2 of Definition \ref{balancing}. It follows that Condition 3 of Definition \ref{balancing} allows furthermore classification of CRNs with extinction events than conditions 1 and 2 allow by themselves. Note also that this CRN is also not classified as having a guaranteed extinction event by Corollary 2 of \cite{Brijder}.

\end{example}


\begin{example}
\label{example100}
It is natural to wonder whether the conditions of Theorem \ref{mainresult} and Corollary \ref{maincorollary} are necessary as well as sufficient for a discrete state space CRN to have an extinction event. To show that they are sufficient only, consider the following CRN, which is simplified from the CRN in Eqn.\ (66) of \cite{N-G-N2011} and reproduced as Eqn.\ (49) in the Supplemental Material for \cite{A-E-J}:
\begin{center}
\begin{tikzpicture}[auto, outer sep=3pt, node distance=2cm,>=latex']
\node (C1) {\; \; \; \; $X_1$};
\node [right of = C1, node distance = 3cm] (C2) {$X_2$ \; \; \; \;};
\node [below of = C1, node distance = 0.75cm] (C3) {$X_2 + X_3$};
\node [right of = C3, node distance = 3cm] (C4) {$X_1+X_3$};
\node [below of = C3, node distance = 0.75cm] (C5) {$X_3 + X_4$};
\node [right of = C5, node distance = 3 cm] (C6) {$X_1 + X_4$};
\path[->] (C1) edge node [above = -0.15cm] {\tiny $1$} (C2);
\path[->] (C3) edge node [above = -0.15cm] {\tiny $2$} (C4);
\path[->, bend left = 10] (C5) edge node [above = -0.15cm] {\tiny $3$} (C6);
\path[->, bend left = 10] (C6) edge node [below = -0.15cm] {\tiny $4$} (C5);
\end{tikzpicture}
\end{center}
The CRN has a guaranteed discrete extinction event, since $X_3$ may convert into $X_1$ through reaction 3, then $X_1$ may convert into $X_2$ through reaction 1. This shuts down all reactions.

To show that Corollary \ref{maincorollary} is incapable of affirming this extinction event, it is necessary to show that every $\mathcal{Y}$-exterior forest of every $\mathcal{Y}$-admissible dom-CRN is unbalanced. We start by considering the terminal complexes and the set $\mathcal{D} = \{ X_1 + X_3 \stackrel{D_1}{\longrightarrow} X_1, X_1 + X_4 \stackrel{D_2}{\longrightarrow} X_1 \}$. This gives the following dom-CRN:
\begin{center}
\begin{tikzpicture}[auto, outer sep=3pt, node distance=2cm,>=latex']
\node (C3) {$X_2 + X_3$};
\node [right of = C3, node distance = 3cm] (C4) {$X_1+X_3$};
\node [below of = C3, node distance = 1cm] (C5) {$X_3 + X_4$};
\node [right of = C5, node distance = 3cm] (C6) {$X_1 + X_4$};
\node [below of = C4, node distance = 0.5cm] (ghost) {};
\node [right of = ghost, node distance = 2.5cm] (C1) {$X_1$};
\node [right of = C1, node distance = 2.5cm] (C2) {$X_2$};
\path[->] (C1) edge node [above = -0.15cm] {\tiny $1$} (C2);
\path[->] (C3) edge node [above = -0.15cm] {\tiny $2$} (C4);
\path[->, bend left = 10] (C5) edge node [above = -0.15cm] {\tiny $3$} (C6);
\path[->, bend left = 10] (C6) edge node [below = -0.15cm] {\tiny $4$} (C5);
\path[dashed,->] (C4) edge node [above = -0.15cm] {\tiny $D_1$} (C1);
\path[dashed,->] (C6) edge node [below = -0.15cm] {\tiny $D_2$} (C1);
\end{tikzpicture}
\end{center}
This dom-CRN is admissible and admits only a single exterior forest in bold red:
\begin{center}
\begin{tikzpicture}[auto, outer sep=3pt, node distance=2cm,>=latex']
\node (C3) {$X_2 + X_3$};
\node [right of = C3, node distance = 3cm] (C4) {$X_1+X_3$};
\node [below of = C3, node distance = 1cm] (C5) {$X_3 + X_4$};
\node [right of = C5, node distance = 3cm] (C6) {$X_1 + X_4$};
\node [below of = C4, node distance = 0.5cm] (ghost) {};
\node [right of = ghost, node distance = 2.5cm] (C1) {$X_1$};
\node [right of = C1, node distance = 2.5cm] (C2) {$X_2$};
\path[red,line width=0.75mm,->] (C1) edge node [above = -0.15cm] {\tiny $\mathbf{1}$} (C2);
\path[red,line width=0.75mm,->] (C3) edge node [above = -0.15cm] {\tiny $\mathbf{2}$} (C4);
\path[red,line width=0.75mm,->, bend left = 10] (C5) edge node [above = -0.15cm] {\tiny $\mathbf{3}$} (C6);
\path[->, bend left = 10] (C6) edge node [below = -0.15cm] {\tiny $4$} (C5);
\path[red,line width=0.75mm,dashed,->] (C4) edge node [above = -0.15cm] {\tiny $\mathbf{D_1}$} (C1);
\path[red,line width=0.75mm,dashed,->] (C6) edge node [below = -0.15cm] {\tiny $\mathbf{D_2}$} (C1);
\end{tikzpicture}
\end{center}
This forest is balanced if we have a nontrivial vector $\alpha = ((\alpha_R)_1, (\alpha_R)_2, (\alpha_R)_3, (\alpha_R)_4, (\alpha_D)_1,$\\$ (\alpha_D)_2)$, $\alpha_R \not= \mathbf{0}$, which satisfies the following:
\begin{equation}
\label{conditions}
\left\{ \; \; \begin{aligned} (\mbox{Cond.} \; 1): \; \; & \hspace{0.17in} (\alpha_R)_4 = 0 \\ (\mbox{Cond.} \; 2): \; \; & -(\alpha_R)_1 +(\alpha_R)_2 + (\alpha_R)_3 -(\alpha_R)_4 = 0 \\ & \hspace{0.17in} (\alpha_R)_1 - (\alpha_R)_2 = 0\\ & -(\alpha_R)_3 + (\alpha_R)_4 = 0 \\ (\mbox{Cond.} \; 3): \; \; & \hspace{0.17in} (\alpha_D)_1 + (\alpha_D)_2 \geq (\alpha_R)_1 \geq 0 \\ & \hspace{0.17in} (\alpha_D)_1 \geq (\alpha_R)_2 \\ & \hspace{0.17in} (\alpha_D)_2 \geq (\alpha_R)_3. \end{aligned} \right.
\end{equation}
This can be satisfied by the vector $\alpha = (1,1,0,0,1,0)$. It follows that the forest is balanced, and since this is the only exterior forest for the given dom-CRN, no conclusion may be reached as a result of Corollary \ref{maincorollary}.

We now consider more general absorbing complex sets $\mathcal{Y} \subseteq \mathcal{C}$. Notice that any potential $\mathcal{Y}$ which contains a subset of $\{ X_2, X_1, X_2+X_3, X_1 + X_3 \}$ can be balanced by the $\alpha$ above, with perhaps different support on $\alpha_D$. If $X_1 \in \mathcal{Y}$, however, we must have $\mathcal{D} = \emptyset$ in order for the dom-CRN to be $\mathcal{Y}$-admissible. Otherwise, we would have an edge in $\mathcal{D}$ which would lead to $\mathcal{Y}$. For $\mathcal{D} = \emptyset$, however, we have that $ X_3+X_4$ and $X_1+X_4$ are terminal in the dom-CRN and therefore $X_3 + X_4$ and $X_1 + X_4$ must be included in $\mathcal{Y}$. This leaves $\mathcal{Y} = \mathcal{C}$ which has an empty exterior forest. There are no other cases to consider, so we are done.

It follows that every $\mathcal{Y}$-exterior forest of every $\mathcal{Y}$-admissible dom-CRN is balanced. Since the CRN has a guaranteed extinction event, however, it follows that the conditions of Corollary \ref{maincorollary} are not necessary for extinction events in discrete state space CRNs.
\end{example}

\begin{example}
\label{example101}

To show that the gap raised in Example \ref{example100} may not be easily overcome by structural considerations alone, consider the following CRN:
\begin{center}
\begin{tikzpicture}[auto, outer sep=3pt, node distance=2cm,>=latex']
\node (C1) {\; \; \; \; $X_1$};
\node [right of = C1, node distance = 3cm] (C2) {$X_2$ \; \; \; \;};
\node [below of = C1, node distance = 0.75cm] (C3) {$X_2 + X_4$};
\node [right of = C3, node distance = 3cm] (C4) {$X_1+X_4$};
\node [below of = C3, node distance = 0.75cm] (C5) {$X_3 + X_5$};
\node [right of = C5, node distance = 3 cm] (C6) {$X_1 + X_5$};
\path[->] (C1) edge node [above = -0.15cm] {\tiny $1$} (C2);
\path[->] (C3) edge node [above = -0.15cm] {\tiny $2$} (C4);
\path[->, bend left = 10] (C5) edge node [above = -0.15cm] {\tiny $3$} (C6);
\path[->, bend left = 10] (C6) edge node [below = -0.15cm] {\tiny $4$} (C5);
\end{tikzpicture}
\end{center}
This is the CRN in Example \ref{example100} with $X_3$ replaced with $X_4$ in the reaction 2, and $X_4$ replaced with $X_5$ in reactions 3 and 4. Examples \ref{example100} and \ref{example101} share significant structural data, including connectivity of paths, domination relations between complexes, and ker$(\Gamma)$.

Taking $\mathcal{D} = \{ X_1 + X_4 \to X_1, X_1 + X_5 \to X_1 \}$ gives the following admissible dom-CRN:
\begin{center}
\begin{tikzpicture}[auto, outer sep=3pt, node distance=2cm,>=latex']
\node (C3) {$X_2 + X_4$};
\node [right of = C3, node distance = 3cm] (C4) {$X_1+X_4$};
\node [below of = C3, node distance = 1cm] (C5) {$X_3 + X_5$};
\node [right of = C5, node distance = 3cm] (C6) {$X_1 + X_5$};
\node [below of = C4, node distance = 0.5cm] (ghost) {};
\node [right of = ghost, node distance = 2.5cm] (C1) {$X_1$};
\node [right of = C1, node distance = 2.5cm] (C2) {$X_2$};
\path[->] (C1) edge node [above = -0.15cm] {\tiny $1$} (C2);
\path[->] (C3) edge node [above = -0.15cm] {\tiny $2$} (C4);
\path[->, bend left = 10] (C5) edge node [above = -0.15cm] {\tiny $3$} (C6);
\path[->, bend left = 10] (C6) edge node [below = -0.15cm] {\tiny $4$} (C5);
\path[dashed,->] (C4) edge node [above = -0.15cm] {\tiny $D_1$} (C1);
\path[dashed,->] (C6) edge node [below = -0.15cm] {\tiny $D_2$} (C1);
\end{tikzpicture}
\end{center}
We arrive at the same balancing equalities and inequalities (\ref{conditions}) as Example \ref{example100}, so that every $\mathcal{Y}$-exterior forest on this dom-CRN is balanced. Since the connectivity and domination relations are shared with Example \ref{example100}, we can exhaust nontrivial $\mathcal{Y}$-admissible dom-CRNs in the same way, and we conclude that Corollary \ref{maincorollary} is inconclusive.

In contrast to Example \ref{example100}, this example does not exhibit an extinction event for most initial conditions. Provided $\mathbf{X}_4 > 0$, $\mathbf{X}_5 > 0$, and any one of $\mathbf{X}_1$, $\mathbf{X}_2$, and $\mathbf{X}_3$ is positive, every complex is recurrent. This analysis suggests that comprehensive conditions for extinction events must depend on further structural information than that considered in this paper.

\end{example}

\section{Conclusions and Future Work}
\label{conclusions}

In this paper, we have presented novel conditions (Theorem \ref{mainresult} and Corollary \ref{maincorollary}) on the structure of a CRN that are sufficient to guarantee that the corresponding CRN 
 exhibits an extinction event. The conditions presented generalize the dependence on terminal SLCs in \cite{A-E-J} and \cite{Brijder}, and also produces a system of equalities and inequalities which can be directly verified. Our conditions have the additional advantage of being fundamentally graphical in nature and suggesting pathways to extinction.

This work raises several promising avenues for future work:
\begin{enumerate}
\item
While Corollary \ref{maincorollary} gives sufficient conditions for discrete extinction, they are not necessary (see Examples \ref{example100} and \ref{example101}). This raises the question of whether there are structural conditions which are both sufficient and necessary for discrete extinction and, if so, which further structural components of the CRN might be utilized in such a result.
\item
The conditions of Corollary \ref{maincorollary} consist of a system of equalities and inequalities. This suggests a computational implementation amenable, in particular, to the methods of linear programming. Linear programming has already been used widely in CRNT for verifying CRNs with desirable structural properties \cite{J4,J-S4,J-P-D,Sz2}. This will be explored and utilized to characterize CRNs with extinction events in the companion paper \cite{J2017}.
\end{enumerate}

\section*{Acknowledgments}

MDJ and DFA were supported by  Army Research Office grant W911NF-14-1-0401.  DFA was also supported by NSF-DMS-1318832 and MDJ was also supported by the Henry Woodward Fund.  GC was supported by NSF-DMS-1412643. RB is a postdoctoral fellow of the Research Foundation -- Flanders (FWO).

\newpage

\begin{appendices}
\section{Proof of Lemma \ref{terminallemma}}
\label{appendixa}

\noindent \textbf{Lemma \ref{terminallemma}.} \emph{If a CRN is subconservative, then for any dom-CRN: (i) the SLCs of the CRN and the dom-CRN coincide, and (ii) every terminal SLC of the dom-CRN is a terminal SLC of the CRN.}

\begin{proof} \emph{Proof of (i):} Consider a subconservative CRN and dom-CRN. Since the reactions of the CRN are contained in the reactions of the dom-CRN, it follows that the SLCs of CRN remain strongly connected in the dom-CRN and therefore are contained in the SLCs of the dom-CRN.

Now suppose that there is an SLC of the dom-CRN which is not contained in any SLC of the CRN. It follows that there are SLCs $W, W' \in \mathcal{W}$ of the CRN such that is a path in the dom-CRN from some complex $y_0 \in W$ to some complex $y'_0 \in W'$, and there is a path in the dom-CRN from some complex $y'_1 \in W'$ to some complex $y_1 \in W$. Since $W$ and $W'$ are strongly connected, we can create a cycle in the dom-CRN by constructing a path from $y_0$ to $y_0'$ to $y_1'$ to $y_1$ back to $y_0$. Furthermore, since this is not a cycle in the CRN (otherwise, $W$ and $W'$ would not be maximally strongly connected in the CRN), we have that there is at least one reaction in this cycle which is from $\mathcal{D}$.

We now index the complexes in the cycle so that, if there are $d' \geq 1$ reactions from $\mathcal{D}$ in the cycle, we have the following segments in between these reactions:
\begin{equation}
\label{cycle}
\begin{split}
& y_1^{(1)} \to y_2^{(1)} \to \cdots \to y_{n_1}^{(1)} \\
& y_1^{(2)} \to y_2^{(2)} \to \cdots \to y_{n_2}^{(2)} \\
& \hspace{0.8in} \vdots \\
& y_1^{(d')} \to y_2^{(d')} \to \cdots \to y_{n_{d'}}^{(d')}.
\end{split}
\end{equation}
We take the segments above to be connected by reactions in $\mathcal{R}$. We also take $y_1^{(i+1)} \leq y_{n_i}^{(i)}$ and $y_1^{(1)} \leq y_{n_{d'}}^{(d')}$ so that the endpoints of successive segments are joined together by $\left(y_{n_i}^{(i)}, y_1^{(i+1)} \right) \in \mathcal{D}$.

Let $\alpha \in \mathbb{Z}_{\geq 0}^r$ denote the vector of counts of the reactions in (\ref{cycle}), and take $y_1^{(1)} = y_1^{(d'+1)}$. It follows that
\begin{equation}
\label{10}
\begin{split} \Gamma \alpha & = \sum_{i=1}^{d'} \sum_{j=1}^{n_i-1} \left( y_{j+1}^{(i)} - y_{j}^{(i)} \right) \\
& = \sum_{i=1}^{d'} \left( y_{n_i}^{(i)} - y_1^{(i)} \right) \\ & = \sum_{i=1}^{d'} \left( y_{n_i}^{(i)} - y_1^{(i+1)} \right) \geq \mathbf{0} \end{split}
\end{equation}
by the domination relations $y_1^{(i+1)} \leq y_{n_i}^{(i)}$. Since the CRN is subconservative, it follows that there is a $\mathbf{c} \in \mathbb{R}_{> 0}^m$ such that $\mathbf{c}^T \Gamma \leq \mathbf{0}$. It follows that we have
\[0 \leq [\mathbf{c}^T \Gamma] \alpha = \mathbf{c}^T [ \Gamma \alpha ] > 0\]
where the last strict inequality follows from $c_i > 0$ for $i \in \{1, \ldots, m\}$ and the observation that at least one component in (\ref{10}) must be strictly greater than zero since the complexes of the CRN are stoichiometrically distinct. This is a contradiction. It follows that such a cycle does not exist in the dom-CRN so that $W$ and $W'$ are SLCs of the CRN. The SLCs of the CRN and dom-CRN therefore coincide and (i) is shown.\\

\noindent \emph{Proof of (ii):} Note that (i) guarantees that the CRN and dom-CRN share the same set of SLCs which we will denote $\mathcal{W}$. Suppose that $W \in \mathcal{W}$ is terminal in the dom-CRN but not in the CRN. This implies that there is a reaction $(y,y') \in \mathcal{R}$ where $y \in W$ and $y' \not\in W$; however, this reaction is included in the dom-CRN so that $W$ may not be terminal in the dom-CRN. It follows that every terminal SLC of the dom-CRN is a terminal SLC of the CRN, and (ii) is shown.
\end{proof}

\section{Proof of Theorem \ref{mainresult}}
\label{appendixb}

\noindent \textbf{Theorem \ref{mainresult}.} \emph{Consider a subconservative CRN and a $\mathcal{Y}$-admissible dom-CRN where $\mathcal{Y} \subseteq \mathcal{C}$ is an absorbing complex set on the dom-CRN.  Suppose that there is a complex $y \not\in \mathcal{Y}$ of the dom-CRN which is recurrent from a state $\mathbf{X} \in \mathbb{Z}_{ \geq 0}^m$ in the discrete state space CRN. Then every $\mathcal{Y}$-exterior forest of the dom-CRN is balanced.}


\begin{remark} The following proof is  inspired by the proof of Theorem 1 in \cite{Brijder}. The notation has been adapted to that of CRNT.
\end{remark}


\begin{proof}

Consider a subconservative CRN and a $\mathcal{Y}$-admissible dom-CRN where $\mathcal{Y} \subseteq \mathcal{C}$ is an absorbing complex set on the dom-CRN.  Suppose that there is a complex $y \not\in \mathcal{Y}$ of the dom-CRN which is recurrent from a state $\mathbf{X} \in \mathbb{Z}_{ \geq 0}^m$. We will show that every $\mathcal{Y}$-exterior forest is balanced; that is, every $\mathcal{Y}$-exterior forest admits a vector $\alpha = (\alpha_R, \alpha_D) \in \mathbb{R}_{\geq 0}^{r+d}$ satisfying the requirements of Definition \ref{balancing}. We will accomplish this by constructing a sequence of reactions which may be executed indefinitively, and then demonstrating that this sequence repeats. We will define $\alpha$ based on a specific repeating portion of this sequence and show that it is balanced.

Let $\mathbf{X} \in \mathbb{Z}_{\geq 0}^m$ denote our initial state. By assumption, there is a state $\mathbf{X}^{1-} \in \mathbb{Z}_{\geq 0}^m$ and a complex $y^{1-} \not\in\mathcal{Y}$ such that (i) $\mathbf{X} \leadsto \mathbf{X}^{1-}$, (ii) $y^{1-}$ is charged at $\mathbf{X}^{1-}$, and (iii) no complex $y \not\in \mathcal{Y}$ is charged at any state along the sequence of reactions from $\mathbf{X} \leadsto \mathbf{X}^{1-}$. That is, $y^{1-}$ is the first $\mathcal{Y}$-exterior complex which becomes charged as a result of the reaction sequence. (If $y^{1-}$ is charged at $\mathbf{X}$ then the sequence of reactions is empty.) By construction, there is a unique path in the exterior forest from $y^{1-}$ to $\mathcal{Y}$. Let $y^{1+} \in \mathcal{Y}$ denote the complex at the end of this path and $\mathbf{X}^{1+} \in \mathbb{Z}_{\geq 0}^m$ denote the state obtained by the sequential occurrence of the true reactions in the path (i.e. include reactions in $\mathcal{R}_F$ but exclude domination relations $\mathcal{D}_F$). Note that (i) $\mathbf{X}^{1-} \leadsto \mathbf{X}^{1+}$, and (ii) $y^{1+}$ is charged at $\mathbf{X}^{1+}$.

We now iterate this procedure for $i=2,3,4,\ldots,$ starting from the state $\mathbf{X}^{(i-1)+}$ rather than $\mathbf{X}$. This generates the following sequence of transitions, which may be continued indefinitely by the recurrence assumption:
\begin{equation}
\label{stuff1}
\mathbf{X} \leadsto \mathbf{X}^{1-} \leadsto \mathbf{X}^{1+} \leadsto \mathbf{X}^{2-} \leadsto \mathbf{X}^{2+} \leadsto \cdots
\end{equation}
Since the CRN is subconservative, we have that there is a finite number of accessible states (Theorem 1, \cite{DBLP:conf/ac/MemmiR75}). It follows that there is a state in $\{ \mathbf{X}^{1-}, \mathbf{X}^{2-}, \ldots \}$ which is repeated. We let $n_1$ and $n_2$ where $0 < n_1 < n_2$ denote the first and second indices for the set $\{ \mathbf{X}^{1-}, \mathbf{X}^{2-}, \ldots \}$ such that $\mathbf{X}^{n_1-} = \mathbf{X}^{n_2-}$. This gives the following subsequence of (\ref{stuff1})
\begin{equation}
\label{stuff2}
\mathbf{X}^{n_1-} \leadsto \mathbf{X}^{n_1+} \leadsto \cdots \leadsto \mathbf{X}^{(n_2-1)+} \leadsto \mathbf{X}^{n_2-}.
\end{equation}
Since $\mathbf{X}^{n_1-} = \mathbf{X}^{n_2-}$, (\ref{stuff2}) defines a sequence of reactions which can be repeated indefinitely.

We now define the vector $\alpha = (\alpha_R,\alpha_D) \in \mathbb{Z}_{\geq 0}^{r+d}$ in the following way: (i) $\alpha_R$ consists of the counts of the reactions in the sequence of reactions in (\ref{stuff2}), and (ii) $\alpha_D$ consists of the counts of the domination relations in the paths taken to construct the reaction sequences in (\ref{stuff2}).

We now show that $\alpha$ if balanced according to Definition \ref{balancing}. It is clear, first of all, that $\alpha$ only has support on $\mathcal{R}_F$ and $\mathcal{D}_F$ so that Condition $1$ is satisfied. In order to show that $\alpha_R \in \mbox{ker}(\Gamma)$, we note from Eqn.\ (\ref{markov}) of the main text, and the definition of $\alpha_R$, that
\[\mathbf{X}^{n_2-} = \mathbf{X}^{n_1-} + \Gamma \alpha_R \; \; \; \Longrightarrow \; \; \; \mathbf{0} = \Gamma \alpha_R.\]
It follows that $\alpha_R \in \mbox{ker}(\Gamma)$ and therefore $\alpha$ satisfies Condition $2$ of Definition \ref{balancing}. To verify Condition $3$, we note that, since $y^{i-}$ is always chosen to be the first complex exterior to $\mathcal{Y}$ which becomes charged, the only contribution to $\alpha$ from the nonterminal component comes from the segments corresponding to $\mathbf{X}^{i-} \leadsto \mathbf{X}^{i+}$, i.e. the paths from $y^{i-}$ to $\mathcal{Y}$. It follows that, at every complex exterior to $\mathcal{Y}$, the count of the reaction out is at least as great as the sum of the reactions in, and $\alpha$ therefore satisfies Condition $3$ of Definition \ref{balancing}. The result is therefore shown.
\end{proof}

\section{Recurrence Properties of SLCs}
\label{appendixc}


We define the following, which extends Definition \ref{recurrence}.

\begin{definition}
Consider a CRN on a discrete state space. An SLC $W \in \mathcal{W}$ is said to be \textbf{recurrent} from state $\mathbf{X} \in \mathbb{Z}_{\geq 0}^m$ if every $y \in W$ is recurrent from $\mathbf{X}$; otherwise, we will say $W$ is \textbf{transient} from $\mathbf{X}$.
\end{definition}

\noindent In other words, an SLC is recurrent if every complex $y$ in the SLC is recurrent. We now consider how the transient and recurrence of complexes may be distributed throughout a CRN.


\begin{lemma}
\label{lemma10}
Consider a CRN on a discrete state space. Then a complex $y \in W$ is recurrent if and only if the SLC $W \in \mathcal{W}$ is recurrent.
\end{lemma}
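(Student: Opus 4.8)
The plan is to prove the two implications separately, treating the initial state $\mathbf{X}$ that is implicit in the notions of recurrence as fixed throughout. One direction is immediate from the definitions: if the SLC $W$ is recurrent from $\mathbf{X}$, then by definition every complex $y \in W$ is recurrent from $\mathbf{X}$, so in particular the given complex is. All of the content lies in the converse, namely that if a \emph{single} complex $y \in W$ is recurrent from $\mathbf{X}$, then every complex $y' \in W$ is recurrent from $\mathbf{X}$.

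The engine of the argument is the following monotonicity observation about charging. Suppose $y$ is charged at a state $\mathbf{Z}$, i.e.\ $\mathbf{Z} \geq y$, and $y \to y' \in \mathcal{R}$ is a reaction. Firing this reaction leads to $\mathbf{Z}' = \mathbf{Z} + (y' - y)$, and since $\mathbf{Z} - y \geq \mathbf{0}$ we obtain $\mathbf{Z}' = (\mathbf{Z} - y) + y' \geq y'$; that is, the product complex $y'$ is automatically charged at $\mathbf{Z}'$. Thus firing a reaction from a state where its source is charged always produces a state where its product complex is charged. Iterating this along a directed path $y = z_0 \to z_1 \to \cdots \to z_k = y'$ in the reaction graph, and starting from a state where $z_0 = y$ is charged, I can fire $z_0 \to z_1, z_1 \to z_2, \ldots, z_{k-1} \to z_k$ in order: each firing is admissible because the preceding step leaves the next source charged, and the terminal state $\mathbf{Z}'$ satisfies $\mathbf{Z} \leadsto \mathbf{Z}'$ with $y'$ charged at $\mathbf{Z}'$.

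With this in hand the converse follows quickly. Let $y \in W$ be recurrent from $\mathbf{X}$ and fix an arbitrary $y' \in W$. Since $W$ is a strong linkage class, $y$ and $y'$ are path-connected, so there is a directed path from $y$ to $y'$. To verify recurrence of $y'$, take any $\mathbf{Y}$ with $\mathbf{X} \leadsto \mathbf{Y}$. Recurrence of $y$ supplies a state $\mathbf{Z}$ with $\mathbf{Y} \leadsto \mathbf{Z}$ at which $y$ is charged; the path-firing construction above then yields a state $\mathbf{Z}'$ with $\mathbf{Z} \leadsto \mathbf{Z}'$ at which $y'$ is charged. Chaining the reachability relation gives $\mathbf{Y} \leadsto \mathbf{Z}'$ with $y'$ charged at $\mathbf{Z}'$, which is precisely the condition for $y'$ to be recurrent from $\mathbf{X}$. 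As $y'$ was arbitrary, every complex of $W$ is recurrent from $\mathbf{X}$, i.e.\ $W$ is recurrent.

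The step I expect to be the crux — and the only place a genuine idea is needed — is the charging-propagation observation. A priori, firing a chain of reactions could exhaust the molecules required further along the path; but the computation $\mathbf{Z} \geq y \Rightarrow \mathbf{Z} - y + y' \geq y'$ shows that each reaction, fired from a state charging its own source, hands off a state charging its product, so no such depletion can obstruct traversal of a directed path within an SLC. Everything else is routine chaining of the reachability relation $\leadsto$.
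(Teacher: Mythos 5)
Your proof is correct and follows essentially the same route as the paper's: recurrence of one complex is transferred along a directed path within the SLC, using the observation that firing a reaction from a state charging its source yields a state charging its product. The only difference is cosmetic --- you make explicit the computation $\mathbf{Z} \geq y \Rightarrow \mathbf{Z} + y' - y \geq y'$, which the paper states informally as each reaction ``necessarily produc[ing] sufficient molecularity for the next reaction to proceed.''
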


\begin{proof}
Let $W \in \mathcal{W}$ denote an SLC of a CRN. Suppose $y \in W$ is recurrent from $\mathbf{X} \in \mathbb{Z}_{\geq 0}^m$ and $y' \in W$. From the recurrence of $y$, it follows that, for every $\mathbf{Y} \in \mathbb{Z}_{\geq 0}^m$ such that $\mathbf{X} \leadsto \mathbf{Y}$, there is a $\mathbf{Z} \in \mathbb{Z}_{\geq 0}^m$ such that $\mathbf{Y} \leadsto \mathbf{Z}$ and $y$ is charged at $\mathbf{Z}$. Since $y$ and $y'$ belong to the same linkage class, it follows that there is a path from $y$ to $y'$. It follows that there is a $\mathbf{W} \in \mathbb{Z}_{\geq 0}^m$ such that $\mathbf{Z} \leadsto \mathbf{W}$ and $y'$ is charged at $\mathbf{W}$. We have used the observation that reactions in a path in the reaction graph may occur in sequence since each reaction necessarily produces sufficient molecularity for the next reaction to proceed. We therefore have that $\mathbf{X} \leadsto \mathbf{Y} \leadsto \mathbf{Z} \leadsto \mathbf{W}$ so that, for every $\mathbf{Y} \in \mathbb{Z}_{\geq 0}^m$, such that $\mathbf{X} \leadsto \mathbf{Y}$, there is a $\mathbf{W} \in \mathbb{Z}_{\geq 0}^m$ such that $\mathbf{Y} \leadsto \mathbf{W}$ and $y'$ is charged at $\mathbf{W}$. It follows that $y'$ is recurrent from $\mathbf{X}$, and we are done.
\end{proof}


\noindent This result shows that complex and SLCs recurrent is equivalent in the sense that we may not have one without the other. We may further relate recurrence to the reaction graph of CRN with the following.

\begin{lemma}
\label{lemma1}
Consider a CRN on a discrete state space and a dom-CRN. Suppose there is a path in the maximal dom-CRN from a complex $y \in W$ to a complex $y' \in W'$ where $W,W' \in \mathcal{W}$ are two SLCs of the CRN. Then the following hold:
\begin{enumerate}
\item
If $W$ is recurrent from $\mathbf{X} \in \mathbb{Z}_{\geq 0}^m$, then $W'$ is recurrent from $\mathbf{X}$.
\item
If $W'$ is transient from $\mathbf{X} \in \mathbb{Z}_{\geq 0}^m$, then $W$ is transient from $\mathbf{X}$.
\item
The set of recurrent complexes is an absorbing complex set of the maximal dom-CRN consisting of the union of SLCs.
\end{enumerate}
\end{lemma}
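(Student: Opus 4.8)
The plan is to reduce all three parts to a single monotonicity principle for ``charging'' along the edges of the maximal dom-CRN, and then to feed it into Lemma \ref{lemma10}. First I would record the following elementary observation. If $y \to y'$ is an edge of the maximal dom-CRN and $y$ is charged at a state $\mathbf{W}$ (so $\mathbf{W} \geq y$), then there is a state $\mathbf{W}'$ with $\mathbf{W} \leadsto \mathbf{W}'$ at which $y'$ is charged: if $y \to y' \in \mathcal{R}$ is a genuine reaction, then firing it gives $\mathbf{W}' = \mathbf{W} + y' - y \geq y'$, while if $y \to y'$ is a domination edge then $y' \leq y \leq \mathbf{W}$, so $y'$ is already charged at $\mathbf{W}' = \mathbf{W}$ with no transition required. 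Iterating this along a path $y = y_{\mu(1)} \to \cdots \to y_{\mu(\ell)} = y'$ shows that whenever $y$ is charged at $\mathbf{W}$, some state reachable from $\mathbf{W}$ charges $y'$.

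For part 1, I would use Lemma \ref{lemma10} to move freely between complexes and their SLCs. Assuming $W$ is recurrent from $\mathbf{X}$, the complex $y \in W$ is recurrent from $\mathbf{X}$; given any $\mathbf{Y}$ with $\mathbf{X} \leadsto \mathbf{Y}$, recurrence of $y$ yields $\mathbf{Z}_0$ with $\mathbf{Y} \leadsto \mathbf{Z}_0$ at which $y$ is charged, and the monotonicity principle applied to the hypothesized path produces $\mathbf{Z}$ with $\mathbf{Z}_0 \leadsto \mathbf{Z}$ at which $y'$ is charged. Hence $y'$ is recurrent from $\mathbf{X}$, and Lemma \ref{lemma10} upgrades this to recurrence of $W'$. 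Part 2 is then immediate as the logical contrapositive of part 1.

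For part 3, writing $\mathcal{Y}_{\mathrm{rec}}$ for the set of complexes recurrent from $\mathbf{X}$, I would establish three facts. That $\mathcal{Y}_{\mathrm{rec}}$ is a union of SLCs is exactly Lemma \ref{lemma10}, since recurrence is constant on each SLC. That $\mathcal{Y}_{\mathrm{rec}}$ has no outgoing edges in the maximal dom-CRN is part 1 applied to a single edge viewed as a length-one path, so any edge leaving a recurrent complex lands in a recurrent complex. The remaining point, that $\mathcal{Y}_{\mathrm{rec}}$ contains the terminal complexes of the maximal dom-CRN, is where I would invoke subconservativeness: it forces the reachable state space to be finite (Theorem 1, \cite{DBLP:conf/ac/MemmiR75}), so from any reachable $\mathbf{Y}$ the trajectory can be driven into a bottom strongly connected component of the reachability digraph. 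The domination edges are essential here, since a complex that strictly dominates another is drained toward it and therefore cannot be terminal in the maximal dom-CRN; this is what keeps the terminal SLCs low enough to be charged inside such a recurrent class, so that they are recurrent rather than merely reachable.

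I expect this last step---reconciling the purely combinatorial notion of a terminal SLC of the maximal dom-CRN with the dynamical notion of a recurrent state---to be the main obstacle. The difficulty is that the monotonicity principle alone is insufficient: one must combine it with the finiteness of the state space to argue that the trajectory actually settles into a terminal SLC (and, in particular, take care that every terminal SLC of the maximal dom-CRN is reached), which is precisely the point at which the draining behaviour of the domination edges does the real work.
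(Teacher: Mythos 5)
Your monotonicity principle and your parts 1--2 are exactly the paper's own argument: charging propagates along a reaction edge because firing $y \to y'$ at $\mathbf{W} \geq y$ yields $\mathbf{W} + y' - y \geq y'$, it propagates along a domination edge for free since $y' \leq y \leq \mathbf{W}$, and Lemma \ref{lemma10} converts complex-level recurrence into SLC-level recurrence, with part 2 as the contrapositive. The first two facts in your part 3 (that the recurrent set $\mathcal{Y}_{\mathrm{rec}}$ is a union of SLCs, and that it has no outgoing edges in the maximal dom-CRN by applying part 1 to length-one paths) are likewise precisely what the paper establishes.

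The gap is the third fact of your part 3: that $\mathcal{Y}_{\mathrm{rec}}$ contains every terminal complex of the maximal dom-CRN. First, subconservativity is not a hypothesis of this lemma, so finiteness of the reachable state space is not available to you. Second, and fatally, the fact is false even for conservative CRNs: take the reactions $X_1 \to X_2$, $X_3 \to X_4$, $X_4 \to X_3$ (conservative via $\mathbf{c} = (1,1,1,1)^T$, and with no domination relations, so the maximal dom-CRN is the CRN itself), with initial state $\mathbf{X} = (0,0,1,0)^T$. The only reachable states are $(0,0,1,0)$ and $(0,0,0,1)$, so $X_3$ and $X_4$ are recurrent while the terminal complex $X_2$ is never charged and hence transient; thus $\mathcal{Y}_{\mathrm{rec}} = \{X_3, X_4\}$ omits a terminal complex of the maximal dom-CRN. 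Your heuristic that domination edges keep terminal SLCs ``low enough to be charged'' conflates minimality in the domination order with chargeability from a given initial state; these are unrelated ($X_2$ dominates nothing, yet can never be charged here). Accordingly, the paper's proof never attempts your step: it reads ``not an absorbing complex set'' as ``has an outgoing edge $(y_i,y_j) \in \mathcal{R} \cup \mathcal{D}$ with $y_i \in \mathcal{Y}_{\mathrm{rec}}$ and $y_j \notin \mathcal{Y}_{\mathrm{rec}}$'' and contradicts claim 1 --- i.e., it proves only closedness plus the union-of-SLCs property, tacitly dropping the terminal-containment clause in the definition of an absorbing complex set. So your parts 1--2 stand, but the last step of part 3 should be abandoned rather than repaired: the only provable (and the intended) content of claim 3 is the no-outgoing-edges property together with the union-of-SLCs property.
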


\begin{proof}
Let $W, W' \in \mathcal{W}$ denote an SLC of a CRN and suppose $W$ is recurrent from $\mathbf{X} \in \mathbb{Z}_{\geq 0}^m$. Suppose there is a path in the dom-CRN from a complex $y \in W$ to a complex $y' \in W'$. It follows that, for every $\mathbf{Y} \in \mathbb{Z}_{\geq 0}^m$ such that $\mathbf{X} \leadsto \mathbf{Y}$, there is a $\mathbf{Z} \in \mathbb{Z}_{\geq 0}^m$ such that $\mathbf{Y} \leadsto \mathbf{Z}$ and $y$ is charged at $\mathbf{Z}$.

Now consider the path in the dom-CRN from $y$ to $y'$. The path may be composed of reactions in $\mathcal{R}$ or $\mathcal{D}$. We have that, for any sequence of reactions in the path which are only from $\mathcal{R}$, if the path starts with a recurrent complex, recurrence is transferred to every complex in the path, including the final one. This can be realized by noting that the occurrence of each reaction necessarily confers sufficient molecularity for the next reaction in the path to take place. Also notice that, for any reaction in $\mathcal{D}$, say $y^{*} \stackrel{D}{\longrightarrow} y^{**}$ we have $y_{k}^{**} \leq y_{k}^{*}$ for all $k \in \{1, \ldots, m\}$. It follows that, if $y^{*}$ is charged at a state, then $y^{**}$ is charged at the state. Combining these two results, we have that there is a $\mathbf{W} \in \mathbb{Z}_{\geq 0}^m$ such that $\mathbf{Z} \leadsto \mathbf{W}$ and $y'$ is charged at $\mathbf{W}$. It follows that $y'$ is recurrent from $\mathbf{X}$. It follows that $W'$ is recurrent from Lemma \ref{lemma10}.

This proves Claim $1$ and, since Claim $2$ is the contrapositive of Claim $1$, this is also shown. Claim $3$ follows by noting that, if $\mathcal{Y} \subseteq \mathcal{C}$ is the set of recurrent complexes but is not an absorbing complex set in the maximal dom-CRN then there is a reaction $(y_i,y_j) \in \ \mathcal{R} \cup \mathcal{D}$ such that $y_i \in \mathcal{Y}$ and $y_j \not\in \mathcal{Y}$. It then follows from Claim $1$ that $y_j$ is recurrent, which contradicts the construction of $\mathcal{Y}$. Since the set of recurrent complexes must consist of the union of SLCs by Lemma \ref{lemma10}, the result is shown.
\end{proof}

\noindent This result gives restrictions on the distribution of transient and recurrent complexes and SLCs within a CRN. Recurrence travels with the direction of the paths in the dom-CRN while transience travels against the direction of these paths. Claim $3$ furthermore suggests that absorbing complex sets are the correct object of study when considering recurrence and transience in discrete state space CRNs. Consider the following example.

\begin{example}
Consider the CRN structure contained in Figure \ref{figure1}, where the boxes represent SLCs. We write $W_j \to W_i$ if there is a path from some $y_i \in W_i$ to $y_j \in W_j$ in the CRN, and we write $W_j \stackrel{D}{\longrightarrow} W_i$ if $y_i \leq y_j$ for some $y_i \in W_i$ and $y_j \in W_j$. In (b) and (c), potential patterns for recurrent SLCs consistent with Lemma \ref{lemma1} are highlighted. If $W_6$ is recurrent, $W_5, W_7,$ and $W_8$ must be recurrent as well (red). If $W_2$ is recurrent, $W_4$ and $W_5$ must be recurrent as well (green). Note that in (c) this recurrence implication flows through the domination relationship. Note also that Lemma \ref{terminallemma} guarantees that a subconservative CRN may not contain any cycles in the representation Figure \ref{figure1}.
\end{example}

\begin{figure}[h]
\centering
\includegraphics[width=12cm]{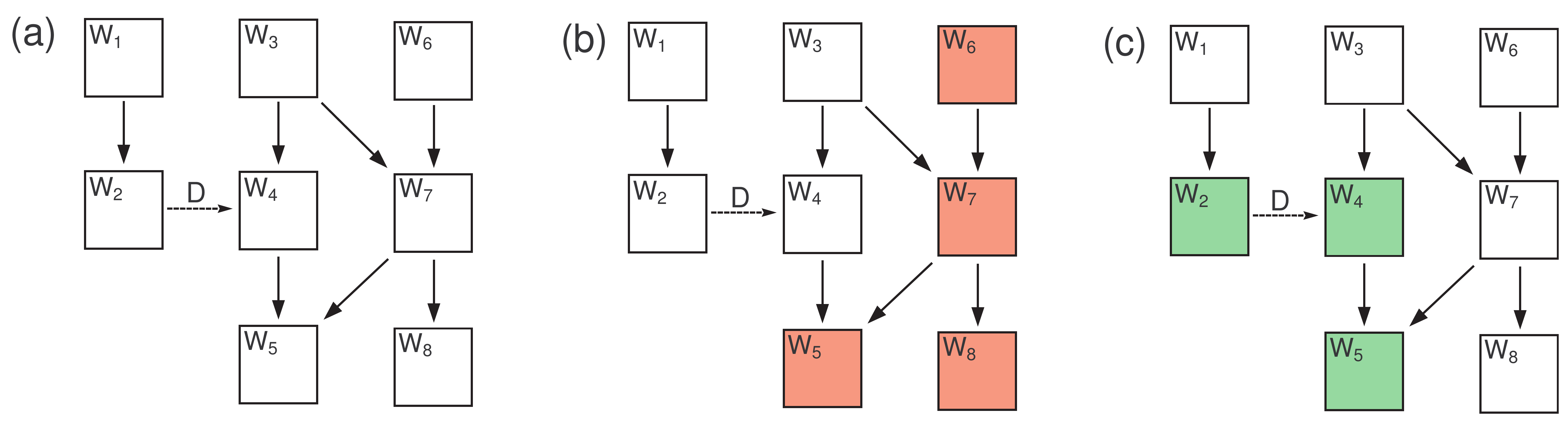}
\caption{\small In (a), a schematic diagram of the domination structure for a CRN with $8$ SLCs. In (b) and (c), two possible configurations of recurrent SLCs (red and green, respectively) are presented which are consistent with Lemma \ref{lemma1}.}
\label{figure1}
\end{figure}


\begin{remark}
For subconservative CRNs, another way to interpret Lemma \ref{lemma1} is by introducing the order operation $\preceq_D$ on the SLCs of a CRN, where $W \preceq_D W'$ if there are $y \in W$ and $y' \in W'$ such that $y \leq y'$. In \cite{Brijder}, R. Brijder showed that the transition closure of this operator is a partial order on the SLCs of a subconservative CRN.

We extend this slightly by defining the relation $\preceq_*$ to be such that $W \preceq_* W'$ if either $W \preceq_D W'$ or there is a path from a complex $y' \in W'$ to a complex $y \in W$ in the CRN, and then let $\preceq$ denote the transitive closure of $\preceq_*$. Since the relation $\preceq$ corresponds to path-connectedness in the dom-CRN, Lemma \ref{terminallemma} is equivalent to the property of $\preceq$ being a partial order on the SLCs of a subconservative CRN. In this interpretation, the minimal SLCs of the CRN under the partial order $\preceq$ correspond to the terminal SLCs of the dom-CRN. We may then interpret Lemma \ref{lemma1} as stating that, for a subconservative CRN with SLCs $W,W' \in \mathcal{W}$ such that $W' \preceq W$, (a) if $W$ is recurrent from $\mathbf{X} \in \mathbb{Z}_{\geq 0}^m$, then $W'$ is recurrent from $\mathbf{X}$, and (b) if $W'$ is transient from $\mathbf{X} \in \mathbb{Z}_{\geq 0}^m$, then $W$ is transient from $\mathbf{X}$. That is, relative to the partial order $\preceq$ on SLCs of a subconservative CRN, recurrence flows downward while transience flows upward.
\end{remark}

\section{Connection with Petri Nets}
\label{appendixd}




Petri nets form a well-studied model of concurrent computation, see, e.g., \cite{PetriNet/review/Pet1977,DBLP:conf/ac/1996petri1}. Petri nets are essentially\footnote{The word ``essentially'' is due to the fact that, unlike CRNs, Petri nets usually have a fixed initial marking $M$. However, this difference is irrelevant for this paper.} equivalent to CRNs on discrete state spaces.  As a consequence, results concerning CRNs on discrete state spaces can be equivalently stated in terms of Petri nets and vice versa.

In a Petri net (without initial marking) $N = (P,T,F)$, species are called \emph{places} (i.e., $P = \mathcal{S}$), reactions are called \emph{transitions} (i.e., $T=\mathcal{R}$), and the stoichiometric coefficients of each reaction is encoded by a function $F: T \to \mathbb{Z}_{\geq 0}^P \times \mathbb{Z}_{\geq 0}^P$. Moreover, molecules are called \emph{tokens} and states $\mathbf{X}$ are called \emph{markings} $M$. Furthermore, the stoichiometric matrix $\Gamma$ is known as the \emph{incidence matrix} of a Petri net, conservation vectors $\mathbf{c} \in \mathbb{Z}_{> 0}^P$ are known as \emph{$P$-invariants}, and vectors $\mathbf{v} \in \mathbb{Z}_{\geq 0}^P$ such that $\mathbf{v} \in \mbox{ker}(\Gamma)$ are known as \emph{$T$-invariants}.

Graphical depictions are different for Petri nets compared to CRNs. For example, consider the following example.

\begin{example}
Reconsider the CRN from Example \ref{example234}, which is represented graphically in Figure~\ref{fig:petri}. In the setting of Petri nets, species/places are denoted by circles and reactions/transitions by boxes. Moreover, the reactants of a reaction/transition are the incoming edges of that transition (including multiplicity as edge labels) and the products of a reaction/transition are the outgoing edges of that transition (including multiplicity as edge labels). The Petri net has the incidence matrix:
\[
\Gamma =
\bordermatrix{
~ & 1 & 2 & 3 \cr
X_1 & -1 & 1 & 1 \cr
X_2 & 1 & -1 & -1
}.
\]
A $P$-invariant is given by the vector $(1,1)$ and $T$-invariants by the vectors $(1,1,0)^T$ and $(1,0,1)^T$.

\begin{figure}
\begin{center}
\scalebox{1.1}{
\begin{tikzpicture}[auto, >=latex']
\tikzstyle{place}=[circle,draw=blue!50,fill=blue!20,thick,
inner sep=0pt,minimum size=6mm]
\tikzstyle{transition}=[rectangle,draw=black!50,fill=black!20,thick,
inner sep=0pt,minimum size=4mm]
\node[place] (xA) {$X_1$};
\node[transition] (b) [below of=xA] {$3$};
\node[place] (xB) [below of=b] {$X_2$};
\node[transition] (a) [left of=b,xshift=-5mm] {$1$};
\node[transition] (c) [right of=b,xshift=5mm] {$2$};
\draw [->] (xB) to [bend right=30] (a);
\draw [->] (xA) to [bend right=30] (a);
\draw [->] (a) to [bend right=30] node [swap] {2}  (xB);
\draw [->] (xB) to (b);
\draw [->] (b) to (xA);
\draw [->] (xB) to [bend right=30] node [swap] {2} (c);
\draw [->] (c) to [bend right=30] (xA);
\draw [->] (c) to [bend right=30] (xB);
\end{tikzpicture}}
\end{center}
\caption{Petri net-style depiction of the CRN of Example \ref{example234}.}
\label{fig:petri}
\end{figure}
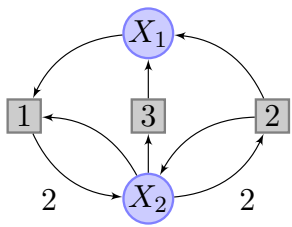
\end{example}
\end{appendices}

\end{document}